\documentclass[final]{lms_kajino}
\usepackage{amsmath,amssymb}
\usepackage[mathscr]{euscript}
\usepackage{enumitem}
\usepackage{graphicx}

\usepackage{xcolor}
\definecolor{darkgreen}{rgb}{0,0.75,0}
\definecolor{darkred}{rgb}{0.75,0,0}
\definecolor{darkmagenta}{rgb}{0.5,0,0.5}
\usepackage[colorlinks,citecolor=darkgreen,linkcolor=darkred,urlcolor=darkmagenta]{hyperref}

\DeclareEmphSequence{\slshape,\normalfont,\slshape} 


\newtheorem{theorem}{Theorem}[section]
\newtheorem{corollary}[theorem]{Corollary}
\newtheorem{lemma}[theorem]{Lemma}
\newtheorem{proposition}[theorem]{Proposition}

\newnumbered{definition}[theorem]{Definition}
\newnumbered{assumption}[theorem]{Assumption}
\newnumbered{remark}[theorem]{Remark}
\newnumbered{example}[theorem]{Example}
\newnumbered{problem}[theorem]{Problem}
\newnumbered{framework}[theorem]{Framework}
\newunnumbered{notation}[theorem]{Notation}

\newcommand{\Capa}{\operatorname{Cap}}
\newcommand{\mr}[1]{\texttt{\href{http://www.ams.org/mathscinet-getitem?mr=#1}{MR#1}}}
\newcommand{\arxiv}[1]{\texttt{\href{http://arxiv.org/abs/#1}{arXiv:#1}}}

\newcommand{\one}{\mathbf{1}} 
\newcommand{\zero}{\mathbf{0}} 

\newcommand{\GSC}{\mathrm{GSC}}
\newcommand{\interior}{\operatorname{int}}
\newcommand{\supp}{\operatorname{supp}}



\title[Walk dimension is greater than two for Sierpi\'{n}ski carpets]
 {An elementary proof that walk dimension is greater than two\\for Brownian motion on Sierpi\'{n}ski carpets} 

\author{Naotaka Kajino}


\classno{28A80, 31C25, 31E05 (primary), 35K08, 60G30, 60J60 (secondary)}

\extraline{
\emph{Keywords and phrases:} Generalized Sierpi\'{n}ski carpet, canonical Dirichlet form,
	walk dimension, sub-Gaussian heat kernel estimate, singularity of energy measure
\vspace*{4pt}

\hspace*{-2pt}The author was supported in part by JSPS KAKENHI Grant Number JP18H01123.}

\begin{document}
\maketitle

\begin{abstract}
We give an elementary self-contained proof of the fact that the walk dimension of the
Brownian motion on an \emph{arbitrary} generalized Sierpi\'{n}ski carpet is greater than
two, no proof of which in this generality had been available in the literature.
Our proof is based solely on the self-similarity and hypercubic symmetry of the associated
Dirichlet form and on several very basic pieces of functional analysis and
the theory of regular symmetric Dirichlet forms. We also present an application of this fact
to the singularity of the energy measures with respect to the canonical self-similar
measure (uniform distribution) in this case, proved first by M.\ Hino in
[\emph{Probab.\ Theory Related Fields} \textbf{132} (2005), no.\ 2, 265--290].
\end{abstract}
\section{Introduction} \label{sec:intro}
It is an established result in the field of analysis on fractals that,
on the \emph{Sierpi\'{n}ski carpet} and certain generalizations of it called
\emph{generalized Sierpi\'{n}ski carpets} (see Figure \ref{fig:GSCs} below),
there exists a canonical diffusion process $\{X_{t}\}_{t\in[0,\infty)}$
which is symmetric with respect to the canonical self-similar measure
(uniform distribution) $\mu$ and satisfies the following estimates for
its transition density (heat kernel) $p_{t}(x,y)$:
\begin{equation}\label{eq:HKEdw}
\begin{split}
\frac{c_{1}}{\mu(B(x,t^{1/d_{\mathrm{w}}}))} \exp\biggl( - \Bigl( \frac{\rho(x,y)^{d_{\mathrm{w}}}}{c_{2}t} \Bigr)^{\frac{1}{d_{\mathrm{w}}-1}}\biggr)
	&\leq p_{t}(x,y)\\
&\leq\frac{c_{3}}{\mu(B(x,t^{1/d_{\mathrm{w}}}))} \exp\biggl( - \Bigl( \frac{\rho(x,y)^{d_{\mathrm{w}}}}{c_{4}t} \Bigr)^{\frac{1}{d_{\mathrm{w}}-1}}\biggr)
\end{split}
\end{equation}
for any points $x,y$ and any $t\in(0,\infty)$, where $c_{1},c_{2},c_{3},c_{4}\in(0,\infty)$
are some constants, $\rho$ is the Euclidean metric, $B(x,s)$ denotes
the open ball of radius $s$ centered at $x$, and $d_{\mathrm{w}}\in[2,\infty)$
is a characteristic of the diffusion called its \emph{walk dimension}.
This result was obtained by M.\ T.\ Barlow and R.\ F.\ Bass in their series of papers
\cite{BB89,BB92,BB99} (see also \cite{BB90,KZ,BBK,BBKT}), its direct analog was proved
also for the Sierpi\'{n}ski gasket in \cite{BP}, for nested fractals in \cite{Kum} and
for affine nested fractals in \cite{FHK}, and it is believed for essentially all the
known examples, and has been verified for many of them, that the walk dimension $d_{\mathrm{w}}$
is \emph{strictly greater than two}. Therefore \eqref{eq:HKEdw} implies in particular
that a typical distance the diffusion travels by time $t$ is of order $t^{1/d_{\mathrm{w}}}$
and is much smaller than the order $t^{1/2}$ of such a distance for the Brownian motion on
the Euclidean spaces. The estimates \eqref{eq:HKEdw} with $d_{\mathrm{w}}>2$ are called
\emph{sub-Gaussian estimates} for this reason, and are also known to imply a number of
other anomalous features of the diffusion, one of the most important among which is the
\emph{singularity} of the associated \emph{energy measures} with respect to the
reference measure $\mu$, proved recently in \cite[Theorem 2.13-(a)]{KM};
see also \cite{Kus89,Kus93,BST,Hin05,HN} for earlier results on singularity of
energy measures for diffusions on fractals.

The main concern of this paper is the proof of the strict inequality $d_{\mathrm{w}}>2$
for an \emph{arbitrary} generalized Sierpi\'{n}ski carpet (see Framework \ref{frmwrk:GSC} and
Definition \ref{dfn:GSC} below for its definition). In fact, the existing proof of $d_{\mathrm{w}}>2$
for this case due to Barlow and Bass in \cite[Proof of Proposition 5.1-(a)]{BB99}
requires a certain extra geometric assumption on the generalized Sierpi\'{n}ski carpet
(see Remark \ref{rmk:dwSC} below), and there is no proof of it in the literature that
is applicable to \emph{any} generalized Sierpi\'{n}ski carpet although they claimed
to have one in \cite[Remarks 5.4-1.]{BB99}. The purpose of the present paper is
to give such a proof as is also elementary, self-contained and based solely
on the self-similarity and hypercubic symmetry of the associated Dirichlet form
(see Theorem \ref{thm:GSCDF} below) and on several very basic pieces of functional analysis
and the theory of regular symmetric Dirichlet forms in \cite[Section 1.4]{FOT}.
This minimality of the requirements in our method is crucial for potential future
applications; in fact, our proof has been adapted by R.\ Shimizu in his recent preprint
\cite{Shi} to show the counterpart of $d_{\mathrm{w}}>2$ for a canonical self-similar
$p$-energy form on the Sierpi\'{n}ski carpet, whose detailed properties are mostly unknown
(except in the case of $p=2$, where his energy form coincides with the Dirichlet form
of the canonical diffusion). As an important consequence of $d_{\mathrm{w}}>2$, we
also see that \cite[Theorem 2.13-(a)]{KM} applies and recovers M.\ Hino's result
in \cite[Subsection 5.2]{Hin05} that for any generalized Sierpi\'{n}ski carpet
the energy measures are singular with respect to the reference measure $\mu$.

This paper is organized as follows. In Section \ref{sec:framework-main-theorem}, we
first introduce the framework of a generalized Sierpi\'{n}ski carpet and the canonical
Dirichlet form on it, then give the precise statement of our main theorem on the strict
inequality $d_{\mathrm{w}}>2$ (Theorem \ref{thm:dwSC}) and deduce the singularity of
the associated energy measures (Corollary \ref{cor:dwSC}). Finally, we give our
elementary self-contained proof of Theorem \ref{thm:dwSC} in Section \ref{sec:proof}.
\begin{notation}
Throughout this paper, we use the following notation and conventions.
\begin{enumerate}[label=\textup{(\arabic*)},align=left,leftmargin=*,topsep=2pt,parsep=0pt,itemsep=2pt]
\item The symbols $\subset$ and $\supset$ for set inclusion
	\emph{allow} the case of the equality.
\item $\mathbb{N}:=\{n\in\mathbb{Z}\mid n>0\}$, i.e., $0\not\in\mathbb{N}$.
\item The cardinality (the number of elements) of a set $A$ is denoted by $\#A$.
\item We set $a\vee b:=\max\{a,b\}$, $a\wedge b:=\min\{a,b\}$ and $a^{+}:=a\vee 0$
	for $a,b\in[-\infty,\infty]$, and we use the same notation also for
	$[-\infty,\infty]$-valued functions and equivalence classes of them. All numerical
	functions in this paper are assumed to be $[-\infty,\infty]$-valued.
\item Let $K$ be a non-empty set. We define $\one_{A}=\one_{A}^{K}\in\mathbb{R}^{K}$ for $A\subset K$ by
	$\one_{A}(x):=\one_{A}^{K}(x):=\bigl\{\begin{smallmatrix}1 & \textrm{if $x\in A$,}\\ 0 & \textrm{if $x\not\in A$,}\end{smallmatrix}$
	and set $\|u\|_{\sup}:=\|u\|_{\sup,K}:=\sup_{x\in K}|u(x)|$ for $u\colon K\to[-\infty,\infty]$.
\item Let $K$ be a topological space. The interior and closure of $A\subset K$ in $K$
	are denoted by $\interior_{K}A$ and $\overline{A}^{K}$, respectively. We set
	$\mathcal{C}(K):=\{u\mid\textrm{$u\colon K\to\mathbb{R}$, $u$ is continuous}\}$
	and $\supp_{K}[u]:=\overline{K\setminus u^{-1}(0)}^{K}$ for $u\in\mathcal{C}(K)$.
\item For $d\in\mathbb{N}$, we equip $\mathbb{R}^{d}$ with the Euclidean norm denoted by $|\cdot|$
	and set $\zero_{d}:=(0)_{k=1}^{d}\in\mathbb{R}^{d}$.
\end{enumerate}
\end{notation}
%
\begin{figure}[b]\centering
	\includegraphics[height=98pt]{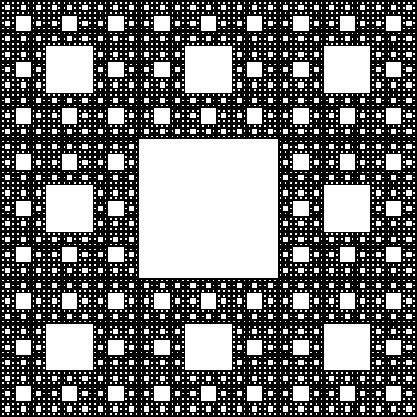}\hspace*{4pt}
	\includegraphics[height=98pt]{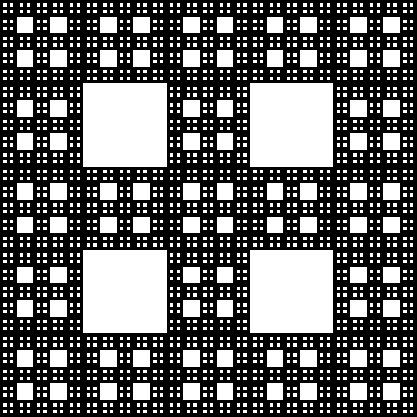}\hspace*{4pt}
	\includegraphics[height=98pt]{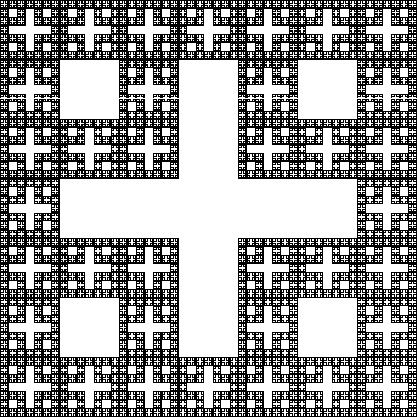}\hspace*{4pt}
	\includegraphics[height=98pt]{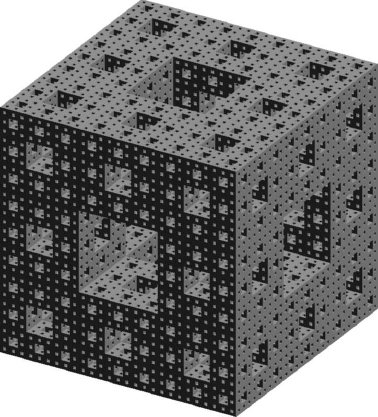}
	\caption{Sierpi\'{n}ski carpet, two other generalized Sierpi\'{n}ski
	carpets with $d=2$ and Menger sponge}\label{fig:GSCs}
\end{figure}
\section{Framework, the main theorem and an application}\label{sec:framework-main-theorem}
The following presentation, up to Theorem \ref{thm:GSCDF} below, is a brief summary of
the corresponding part in \cite[Section 4]{K:SPFSC}; see \cite[Section 5]{K:oscNRVNP}
and the references therein for further details.

We fix the following setting throughout this and the next sections.
\begin{framework}\label{frmwrk:GSC}
Let $d,l\in\mathbb{N}$, $d\geq 2$, $l\geq 3$ and set $Q_{0}:=[0,1]^{d}$.
Let $S\subsetneq\{0,1,\ldots,l-1\}^{d}$ be non-empty, define
$f_{i}\colon\mathbb{R}^{d}\to\mathbb{R}^{d}$ by $f_{i}(x):=l^{-1}i+l^{-1}x$ for each $i\in S$
and set $Q_{1}:=\bigcup_{i\in S}f_{i}(Q_{0})$, so that $Q_{1}\subsetneq Q_{0}$.
Let $K$ be the \emph{self-similar set} associated with $\{f_{i}\}_{i\in S}$,
i.e., the unique non-empty compact subset of $\mathbb{R}^{d}$ such that
$K=\bigcup_{i\in S}f_{i}(K)$, which exists and satisfies $K\subsetneq Q_{0}$
thanks to $Q_{1}\subsetneq Q_{0}$ by \cite[Theorem 1.1.4]{Kig01}, and
set $F_{i}:=f_{i}|_{K}$ for each $i\in S$ and $\GSC(d,l,S):=(K,S,\{F_{i}\}_{i\in S})$.
Let $\rho\colon K\times K\to[0,\infty)$ be the Euclidean metric on $K$ given by $\rho(x,y):=|x-y|$,
set $d_{\mathrm{f}}:=\log_{l}\#S$, and let $\mu$ be the \emph{self-similar measure} on
$\GSC(d,l,S)$ with weight $(1/\#S)_{i\in S}$, i.e., the unique Borel probability measure
on $K$ such that $\mu=(\#S)\mu\circ F_{i}$ (as Borel measures on $K$) for any $i\in S$,
which exists by \cite[Propositions 1.5.8, 1.4.3, 1.4.4 and Corollary 1.4.8]{Kig01}.
Let $\langle\cdot,\cdot\rangle$ and $\|\!\cdot\!\|_{2}$ denote the inner product
and norm on $L^{2}(K,\mu)$, respectively.
\end{framework}
Recall that $d_{\mathrm{f}}$ is the Hausdorff dimension of $(K,\rho)$ and that
$\mu$ is a constant multiple of the $d_{\mathrm{f}}$-dimensional Hausdorff measure
on $(K,\rho)$; see, e.g., \cite[Proposition 1.5.8 and Theorem 1.5.7]{Kig01}.
Note that $d_{\mathrm{f}}<d$ by $S\subsetneq\{0,1,\ldots,l-1\}^{d}$.

The following definition is due to Barlow and Bass \cite[Section 2]{BB99}, except that
the non-diagonality condition in \cite[Hypotheses 2.1]{BB99} has been strengthened
later in \cite{BBKT} to fill a gap in \cite[Proof of Theorem 3.19]{BB99};
see \cite[Remark 2.10-1.]{BBKT} for some more details of this correction.
\begin{definition}[(Generalized Sierpi\'{n}ski carpet, {\cite[Subsection \textup{2.2}]{BBKT}})]\label{dfn:GSC}
$\GSC(d,l,S)$ is called a \emph{generalized Sierpi\'{n}ski carpet}
if and only if the following four conditions are satisfied:
\begin{enumerate}[label=\textup{(GSC\arabic*)},align=left,leftmargin=*,topsep=2pt,parsep=0pt,itemsep=2pt]
\item\label{GSC1}(Symmetry) $f(Q_{1})=Q_{1}$ for any isometry $f$ of $\mathbb{R}^{d}$ with $f(Q_{0})=Q_{0}$.
\item\label{GSC2}(Connectedness) $Q_{1}$ is connected.
\item\label{GSC3}(Non-diagonality)
	$\interior_{\mathbb{R}^{d}}\bigl(Q_{1}\cap \prod_{k=1}^{d}[(i_{k}-\varepsilon_{k})l^{-1},(i_{k}+1)l^{-1}]\bigr)$
	is either empty or connected for any $(i_{k})_{k=1}^{d}\in\mathbb{Z}^{d}$ and
	any $(\varepsilon_{k})_{k=1}^{d}\in\{0,1\}^{d}$.
\item\label{GSC4}(Borders included) $[0,1]\times\{0\}^{d-1}\subset Q_{1}$.
\end{enumerate}
\end{definition}
As special cases of Definition \ref{dfn:GSC}, $\GSC(2,3,S_{\mathrm{SC}})$ and
$\GSC(3,3,S_{\mathrm{MS}})$ are called the \emph{Sierpi\'{n}ski carpet}
and the \emph{Menger sponge}, respectively, where
$S_{\mathrm{SC}}:=\{0,1,2\}^{2}\setminus\{(1,1)\}$ and
$S_{\mathrm{MS}}:=\bigl\{(i_{1},i_{2},i_{3})\in\{0,1,2\}^{3}\bigm|\sum_{k=1}^{3}\one_{\{1\}}(i_{k})\leq 1\bigr\}$
(see Figure \ref{fig:GSCs} above).

See \cite[Remark 2.2]{BB99} for a description of the meaning of each of the four
conditions \ref{GSC1}, \ref{GSC2}, \ref{GSC3} and \ref{GSC4} in Definition \ref{dfn:GSC}.
To be precise, \ref{GSC3} is slightly different from the formulation of the non-diagonality
condition in \cite[Subsection 2.2]{BBKT}, but they have been proved to be equivalent to
each other in \cite[Theorem 2.4]{K:NDSC}; see \cite[\S 2]{K:NDSC} for some other equivalent
formulations of the non-diagonality condition.

Throughout the rest of this paper, we assume that $\GSC(d,l,S)=(K,S,\{F_{i}\}_{i\in S})$
as introduced in Framework \ref{frmwrk:GSC} is a generalized Sierpi\'{n}ski carpet
as defined in Definition \ref{dfn:GSC}.

We next recall the result on the existence and uniqueness of a canonical diffusion
(Brownian motion) on $\GSC(d,l,S)$, which can be presented most efficiently in the language
of its associated (regular symmetric) Dirichlet form on $L^{2}(K,\mu)$ as follows;
see \cite{FOT,CF} for the basics of regular symmetric Dirichlet forms and associated
symmetric Markov processes. Below we state only the final consequence of the unique
characterizations of a canonical Dirichlet form on $\GSC(d,l,S)$ established in \cite{BBKT}
(combined with some complementary discussions in \cite{Hin13,K:oscNRVNP}), and we refer
the reader to \cite[Section 1]{BBKT} for a description of the earlier results on its
existence in \cite{BB89,KZ,BB99}.
\begin{definition}\label{dfn:GSC-isometry}
We define
\begin{equation}\label{eq:GSC-isometry}
\mathcal{G}_{0}:=\{f|_{K}\mid\textrm{$f$ is an isometry of $\mathbb{R}^{d}$, $f(Q_{0})=Q_{0}$}\},
\end{equation}
which forms a finite subgroup of the group of homeomorphisms of $K$ by virtue of \ref{GSC1}.
\end{definition}
\begin{theorem}[({\cite[Theorems \textup{1.2} and \textup{4.32}]{BBKT}}, {\cite[Proposition \textup{5.1}]{Hin13}}, {\cite[Proposition \textup{5.9}]{K:oscNRVNP}})]\label{thm:GSCDF}
There exists a unique (up to constant multiples of $\mathcal{E}$) regular symmetric Dirichlet
form $(\mathcal{E},\mathcal{F})$ on $L^{2}(K,\mu)$ satisfying $\mathcal{E}(u,u)>0$ for some
$u\in\mathcal{F}$, $\one_{K}\in\mathcal{F}$, $\mathcal{E}(\one_{K},\one_{K})=0$, and the following:
\begin{enumerate}[label=\textup{(GSCDF\arabic*)},align=left,leftmargin=*,topsep=2pt,parsep=0pt,itemsep=2pt]
\item\label{GSCDF1}If $u\in \mathcal{F}\cap\mathcal{C}(K)$ and $g\in\mathcal{G}_{0}$
	then $u\circ g\in\mathcal{F}$ and $\mathcal{E}(u\circ g,u\circ g)=\mathcal{E}(u,u)$.
\item\label{GSCDF2}$\mathcal{F}\cap\mathcal{C}(K)=\{u\in\mathcal{C}(K)\mid\textrm{$u\circ F_{i}\in\mathcal{F}$ for any $i\in S$}\}$.
\item\label{GSCDF3}There exists $r\in(0,\infty)$ such that for any $u\in\mathcal{F}\cap\mathcal{C}(K)$,
	\begin{equation}\label{eq:GSCDF3}
	\mathcal{E}(u,u)=\sum_{i\in S}\frac{1}{r}\mathcal{E}(u\circ F_{i},u\circ F_{i}).
	\end{equation}
\end{enumerate}
\end{theorem}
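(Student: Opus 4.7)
The plan is to split the statement into existence and uniqueness, following the established approaches of Barlow--Bass \cite{BB99} for the former and Barlow--Bass--Kumagai--Teplyaev \cite{BBKT} for the latter. For existence I would construct a $\mu$-symmetric diffusion on $K$ as a subsequential limit of time-changed reflected Brownian motions on the pre-fractal approximations $Q_{n}:=\bigcup_{w\in S^{n}}f_{w_{1}}\circ\cdots\circ f_{w_{n}}(Q_{0})$; for uniqueness I would show that any two forms satisfying \ref{GSCDF1}--\ref{GSCDF3} determine the same Markov process up to a constant time-change.

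For existence, the key steps are: (i) let $B^{n}$ be the normally-reflected Brownian motion on $Q_{n}$ with respect to normalized Lebesgue measure; (ii) using resistance and Poincar\'{e} estimates that rely on \ref{GSC1}, \ref{GSC2} and the non-diagonality characterization \hyperlink{NDF}{(NDF)}, prove uniform elliptic-Harnack inequalities and extract a scale factor $\lambda\in(0,\infty)$ for which the processes $t\mapsto B^{n}_{\lambda^{n}t}$, pushed forward to $K$, are tight as $n\to\infty$; (iii) identify any subsequential limit with a conservative $\mu$-symmetric diffusion on $K$ whose Dirichlet form $(\mathcal{E},\mathcal{F})$ satisfies \ref{GSCDF1} by $\mathcal{G}_{0}$-equivariance of the construction and \ref{GSCDF3} with $r:=\lambda$ by the choice of time scale; (iv) deduce \ref{GSCDF2} from \ref{GSCDF3} together with the standard self-similar decomposition of $\mathcal{F}$, as carried out in \cite{Hin13,K:oscNRVNP}. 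Non-triviality $\mathcal{E}(u,u)>0$ for some $u$ is automatic from the non-triviality of the limiting diffusion.

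For uniqueness, given two normalized candidates $(\mathcal{E}^{(j)},\mathcal{F}^{(j)})$, $j=1,2$, I would first show via \ref{GSCDF3} and a Nash-type inequality (obtained using \ref{GSCDF1} and \ref{GSCDF2}) that both admit heat kernels satisfying sub-Gaussian bounds with a common walk dimension $d_{\mathrm{w}}$. Then, following \cite{BBKT}, I would compare the two diffusions by examining their hitting-time distributions on the faces of the cells $f_{i}(K)$: the hypercubic symmetry in \ref{GSCDF1} forces these distributions to be determined by a single scaling parameter, which after normalization is identified with $r$. Matching the generators on a core of cell-wise harmonic functions then identifies the two processes.

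The main obstacle is uniqueness, which resisted proof from \cite{BB89} until \cite{BBKT}. The rigidity one must extract from \ref{GSCDF1} alone is delicate: a priori one cannot even compare the \emph{domains} $\mathcal{F}^{(1)}$ and $\mathcal{F}^{(2)}$, so the argument hinges on propagating symmetry through the Markov semigroup to produce enough $\mathcal{G}_{0}$-invariant harmonic functions on which both forms must agree, and then bootstrapping via \ref{GSCDF2} to all of $\mathcal{F}\cap\mathcal{C}(K)$.
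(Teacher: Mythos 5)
The first thing to say is that the paper does not prove Theorem \ref{thm:GSCDF}: it is imported wholesale from the literature, as the bracketed attributions and the sentence ``As a consequence of the results in \cite{BBKT}, after some additional arguments in \cite{Hin13,K:oscNRVNP} we have the following unique existence'' make explicit. The author's stated aim is to use \emph{only} the properties listed in this theorem (minus uniqueness) as a black box. So there is no internal argument to compare yours against; the comparison has to be with \cite{BB89,BB99,KZ,BBKT,Hin13,K:oscNRVNP}, and measured against those your proposal is a table of contents rather than a proof. On the existence side, steps (ii) and (iii) --- the uniform resistance and Harnack estimates, the identification of the time-scaling factor, the tightness of the rescaled reflected Brownian motions, and the non-degeneracy of the limit --- are precisely the analytic core of \cite{BB89,BB99}, occupying most of those papers, and you only name them.

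The more serious issues are in the uniqueness half. First, the mechanism you describe (comparing hitting-time distributions on faces and then ``matching the generators on a core of cell-wise harmonic functions'') is not the argument that is known to work; \cite{BBKT} instead shows that \emph{any} two admissible forms $\mathcal{A},\mathcal{B}$ are mutually comparable (this is where the elliptic Harnack inequality and resistance estimates, valid uniformly over the whole class, enter), and then considers $\lambda:=\inf_{u}\mathcal{A}(u,u)/\mathcal{B}(u,u)$ and derives a contradiction from the assumption $\mathcal{A}\neq\lambda\mathcal{B}$ by showing a suitable perturbation would again lie in the class. The domain-comparison difficulty you correctly flag at the end is exactly what defeats a direct generator-matching approach, so flagging it does not resolve it. Second, the uniqueness asserted in Theorem \ref{thm:GSCDF} is uniqueness within the class defined by \ref{GSCDF1}--\ref{GSCDF3}, whereas \cite[Theorem 1.2]{BBKT} gives uniqueness within the class of forms invariant under \emph{local} (cell-wise) symmetries at every scale, a hypothesis strictly stronger than \ref{GSCDF1}; bridging these two classes --- i.e., showing that the BBKT form is self-similar and that any form satisfying \ref{GSCDF1}--\ref{GSCDF3} falls under BBKT's hypotheses --- is exactly the content of \cite[Proposition 5.1]{Hin13} and \cite[Proposition 5.9]{K:oscNRVNP}, which your outline invokes only in passing for \ref{GSCDF2} in the existence half and not at all where it is actually needed.
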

Throughout the rest of this paper, we fix $(\mathcal{E},\mathcal{F})$ and $r$ as given in
Theorem \ref{thm:GSCDF}; note that $r$ is uniquely determined by $(\mathcal{E},\mathcal{F})$,
since $\mathcal{E}(u,u)>0$ for some $u\in\mathcal{F}\cap\mathcal{C}(K)$ by the existence of such
$u\in\mathcal{F}$ and the denseness of $\mathcal{F}\cap\mathcal{C}(K)$ in the Hilbert space
$(\mathcal{F},\mathcal{E}_{1}:=\mathcal{E}+\langle\cdot,\cdot\rangle)$.
\begin{definition}\label{dfn:GSCDF}
The regular symmetric Dirichlet form $(\mathcal{E},\mathcal{F})$ on $L^{2}(K,\mu)$
is called the \emph{canonical Dirichlet form} on $\GSC(d,l,S)$,
and the \emph{walk dimension} $d_{\mathrm{w}}$ of $(\mathcal{E},\mathcal{F})$
(or of $\GSC(d,l,S)$) is defined by $d_{\mathrm{w}}:=\log_{l}(\#S/r)$.
\end{definition}
\begin{remark}\label{rmk:GSCDF}
The walk dimension $d_{\mathrm{w}}$ defined in Definition \ref{dfn:GSCDF} coincides
with the exponent $d_{\mathrm{w}}$ in \eqref{eq:HKEdw} for the regular symmetric
Dirichlet space $(K,\mu,\mathcal{E},\mathcal{F})$ equipped with the Euclidean metric $\rho$;
see the proof of Corollary \ref{cor:dwSC} below and the references therein for details.
\end{remark}
The main result of this paper is an elementary self-contained proof of the following
theorem based solely on the setting and properties stated in Framework \ref{frmwrk:GSC},
Definitions \ref{dfn:GSC}, \ref{dfn:GSC-isometry} and Theorem \ref{thm:GSCDF}
(except the uniqueness of $(\mathcal{E},\mathcal{F})$) and on several very basic pieces of
functional analysis and the theory of regular symmetric Dirichlet forms in \cite[Section 1.4]{FOT}.
To keep the whole treatment as elementary and self-contained as possible,
in our proof of Theorem \ref{thm:dwSC} we refrain from using any known properties of
$(\mathcal{E},\mathcal{F})$ other than those in Theorem \ref{thm:GSCDF}.
\begin{theorem}[(Cf.\ {\cite[Remarks 5.4-1.]{BB99}})]\label{thm:dwSC}
$d_{\mathrm{w}}>2$.
\end{theorem}
\begin{remark}\label{rmk:dwSC}
\emph{No proof of Theorem \textup{\ref{thm:dwSC}} in the present generality had been available
in the literature}, although Barlow and Bass claimed to have one in \cite[Remarks 5.4-1.]{BB99}. Its
existing proof in \cite[Proof of Proposition 5.1-(a)]{BB99} requires the extra assumption on $\GSC(d,l,S)$ that
\begin{equation}\label{eq:dwSC-BB99}
\#\{(i_{k})_{k=1}^{d}\in S\mid i_{1}=j\}\not=\#\{(i_{k})_{k=1}^{d}\in S\mid i_{1}=0\}
	\quad\textrm{for some $j\in\{1,\ldots,l-1\}$},
\end{equation}
which holds for any generalized Sierpi\'{n}ski carpet with $d=2$ but does fail
for infinitely many examples of generalized Sierpi\'{n}ski carpets with fixed $d$
for each $d\geq 3$; indeed, for each $d,l\in\mathbb{N}$ with $d\geq 3$ and $l\geq 2$,
it is not difficult to see that $\GSC(d,2ld,S_{d,l})$ with
\begin{equation}\label{eq:dwSC-BB99-counterexamples}
S_{d,l}:=\biggl\{i\biggm|
	\begin{minipage}{290pt}
	$i=(i_{k})_{k=1}^{d}\in\{0,1,\ldots,2ld-1\}^{d}$, and for any $j\in\{1,3,\ldots,2l-1\}$,
	$\{|2i_{k}-2ld+1|\mid k\in\{1,2,\ldots,d\}\}\not=\{j,j+2l,\ldots,j+2l(d-1)\}$
	\end{minipage}
	\biggr\}
\end{equation}
satisfies \ref{GSC1}, \ref{GSC2}, \ref{GSC3} and \ref{GSC4} in Definition \ref{dfn:GSC}
but not \eqref{eq:dwSC-BB99}.
\end{remark}
The proof of Theorem \ref{thm:dwSC} is given in the next section.
We conclude this section by presenting an application of Theorem \ref{thm:dwSC}
to the singularity with respect to $\mu$ of the energy measures associated with
$(K,\mu,\mathcal{E},\mathcal{F})$, which was proved first by Hino in
\cite[Subsection 5.2]{Hin05} via $d_{\mathrm{w}}>2$ and is obtained here
by combining \cite[Theorem 2.13-(a)]{KM} with $d_{\mathrm{w}}>2$.
\begin{definition}[(Cf.\ {\cite[(3.2.13), (3.2.14) and (3.2.15)]{FOT}})]\label{d:EnergyMeas}
The \emph{$\mathcal{E}$-energy measure} $\mu_{\langle u\rangle}$ of
$u\in\mathcal{F}$ is defined, first for $u\in\mathcal{F}\cap L^{\infty}(K,\mu)$
as the unique ($[0,\infty]$-valued) Borel measure on $K$ such that
\begin{equation}\label{e:EnergyMeas}
\int_{K}v\,d\mu_{\langle u\rangle}=\mathcal{E}(uv,u)-\frac{1}{2}\mathcal{E}(v,u^{2})
	\qquad\textrm{for any $v\in\mathcal{F}\cap\mathcal{C}(K)$,}
\end{equation}
and then by
$\mu_{\langle u\rangle}(A):=\lim_{n\to\infty}\mu_{\langle(-n)\vee(u\wedge n)\rangle}(A)$
for each Borel subset $A$ of $K$ for general $u\in\mathcal{F}$;
note that $uv\in\mathcal{F}$ for any $u,v\in\mathcal{F}\cap L^{\infty}(K,\mu)$ by
\cite[Theorem 1.4.2-(ii)]{FOT} and that $\{(-n)\vee(u\wedge n)\}_{n=1}^{\infty}\subset\mathcal{F}$
and $\lim_{n\to\infty}\mathcal{E}\bigl(u-(-n)\vee(u\wedge n),u-(-n)\vee(u\wedge n)\bigr)=0$
by \cite[Theorem 1.4.2-(iii)]{FOT}.
\end{definition}
\begin{corollary}[({\cite[Subsection 5.2]{Hin05}})]\label{cor:dwSC}
$\mu_{\langle u\rangle}$ is singular with respect to $\mu$ for any $u\in\mathcal{F}$.
\end{corollary}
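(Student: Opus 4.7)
The plan is to obtain the corollary as a direct application of \cite[Theorem 2.13-(a)]{KM}, using Theorem \ref{thm:dwSC} to supply its key hypothesis $d_{\mathrm{w}}>2$. That theorem asserts (roughly) that for a metric measure Dirichlet space whose heat kernel satisfies the sub-Gaussian estimates \eqref{eq:HKEdw} with exponent $d_{\mathrm{w}}>2$, every energy measure $\mu_{\langle u\rangle}$ with $u\in\mathcal{F}$ is singular with respect to the reference measure. The task therefore reduces to placing $(K,\rho,\mu,\mathcal{E},\mathcal{F})$ into that framework, after which the corollary is immediate.

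The preliminary step I would carry out is to identify the walk dimension $d_{\mathrm{w}}=\log_{l}(\#S/r)$ of Definition \ref{dfn:GSCDF} with the exponent appearing in \eqref{eq:HKEdw}, thereby justifying Remark \ref{rmk:GSCDF}. The two-sided sub-Gaussian heat kernel estimates \eqref{eq:HKEdw} on $(K,\rho)$ for the Barlow--Bass diffusion are established in \cite{BB89,BB92,BB99}, with a walk-dimension exponent $d_{\mathrm{w}}^{\mathrm{BB}}$ determined by the self-similar scaling of the associated Dirichlet form. The uniqueness result \cite[Theorem 1.2]{BBKT} cited in Theorem \ref{thm:GSCDF} identifies $(\mathcal{E},\mathcal{F})$ with a constant multiple of the Barlow--Bass form, and matching the scaling identity \ref{GSCDF3} (with constant $r$) against the corresponding scaling of the Barlow--Bass form forces $d_{\mathrm{w}}^{\mathrm{BB}}=\log_{l}(\#S/r)$. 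Along the way one records that $\mu$ is Ahlfors $d_{\mathrm{f}}$-regular (so that $(K,\rho,\mu)$ is volume doubling) and that the Euclidean metric $\rho$ is bi-Lipschitz equivalent to a geodesic metric on $K$ by \ref{GSC2} and \ref{GSC4}, so the chain-type metric hypotheses of \cite[Theorem 2.13-(a)]{KM} are met.

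With these ingredients in hand I would invoke \cite[Theorem 2.13-(a)]{KM}: by Theorem \ref{thm:dwSC} we have $d_{\mathrm{w}}>2$, so its conclusion yields $\mu_{\langle u\rangle}\perp\mu$ for every $u\in\mathcal{F}$, as required. The main obstacle is bibliographic rather than analytic --- one must quote the heat kernel estimates of \cite{BB99} in a form in which the scaling exponent is manifestly the $\log_{l}(\#S/r)$ defined here (rather than an \emph{a priori} distinct quantity) and verify that the hypotheses of \cite[Theorem 2.13-(a)]{KM} are formulated with enough generality to apply on $(K,\rho,\mu,\mathcal{E},\mathcal{F})$ equipped with the Euclidean metric $\rho$, instead of an intrinsic chemical geodesic metric.
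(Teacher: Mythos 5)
Your proposal is correct and takes essentially the same route as the paper's proof: one verifies the hypotheses of \cite[Theorem 2.13-(a)]{KM} --- Ahlfors $d_{\mathrm{f}}$-regularity of $\mu$, the chain condition for $(K,\rho)$, and the sub-Gaussian estimates \eqref{eq:HKEdw} from \cite{BBKT,BB99} with exponent identified as $\log_{l}(\#S/r)$ (the paper quotes \cite[Proposition 5.9]{K:oscNRVNP} for this identification rather than rederiving it from the scaling of the Barlow--Bass form) --- and then concludes from $d_{\mathrm{w}}>2$. The only hypothesis you leave implicit is the strong locality of $(\mathcal{E},\mathcal{F})$, which the paper checks via \cite[Lemma 3.4]{K:cdsa} together with conservativeness.
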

\begin{proof}
(Unlike the proof of Theorem \ref{thm:dwSC}, this proof is \emph{not} meant to be self-contained.)
$(\mathcal{E},\mathcal{F})$ is local by \cite[Lemma 3.4]{K:cdsa}, whose proof is based
only on \ref{GSCDF2}, \ref{GSCDF3} and \cite[Exercise 1.4.1 and Theorem 3.1.2]{FOT}, and
is therefore strongly local since $\mathcal{E}(\one_{K},v)=0$ for any $v\in\mathcal{F}$
by $\mathcal{E}(\one_{K},\one_{K})=0$ (see Lemma \ref{lem:conservative} below).
We easily see that $c_{5}s^{d_{\mathrm{f}}}\leq\mu(B(x,s))\leq c_{6}s^{d_{\mathrm{f}}}$
for any $(x,s)\in K\times(0,d]$ for some $c_{5},c_{6}\in(0,\infty)$, where
$B(x,s):=\{y\in K\mid\rho(x,y)<s\}$. It is also immediate that $(K,\rho)$ satisfies the
chain condition as defined in \cite[Definition 2.10-(a)]{KM}, in view of the fact that
by \ref{GSC4}, \ref{GSC1} and \ref{GSC2} there exists $c_{7}\in(0,\infty)$ such that
for any $x,y\in K$ there exists a continuous map $\gamma\colon[0,1]\to K$ with $\gamma(0)=x$
and $\gamma(1)=y$ whose Euclidean length is at most $c_{7}\rho(x,y)$. Finally,
by \cite[Theorem 4.30 and Remark 4.33]{BBKT} (see also \cite[Theorem 1.3]{BB99})
the heat kernel $p_{t}(x,y)$ of $(K,\mu,\mathcal{E},\mathcal{F})$ exists and
there exist $\beta_{0}\in(1,\infty)$ and $c_{1},c_{2},c_{3},c_{4}\in(0,\infty)$
such that \eqref{eq:HKEdw} with $\beta_{0}$ in place of $d_{\mathrm{w}}$ holds
for $\mu$-a.e.\ $x,y\in K$ for each $t\in(0,\infty)$, but then necessarily
$\beta_{0}=\log_{l}(\#S/r)=d_{\mathrm{w}}$ by \ref{GSCDF2}, \ref{GSCDF3} and \cite[Theorem 4.31]{BBKT}
as shown in \cite[Proof of Proposition 5.9, Second paragraph]{K:oscNRVNP},
whence $\beta_{0}=d_{\mathrm{w}}>2$ by Theorem \ref{thm:dwSC}. Thus
$(K,\rho,\mu,\mathcal{E},\mathcal{F})$ satisfies all the assumptions of
\cite[Theorem 2.13-(a)]{KM}, which implies the desired claim.
\end{proof}
%
\section{The elementary proof of the main theorem}\label{sec:proof}
This section is devoted to giving our elementary self-contained proof of the main
theorem (Theorem \ref{thm:dwSC}), which is an adaptation of, and has been inspired by,
an elementary proof of the counterpart of Theorem \ref{thm:dwSC} for Sierpi\'{n}ski
gaskets presented in \cite[Proof of Proposition 5.3, Second paragraph]{KM}.
We start with basic definitions and some simple lemmas.
\begin{definition}\label{dfn:words}
We set $W_{m}:=S^{m}=\{w_{1}\ldots w_{m}\mid\textrm{$w_{i}\in S$ for $i\in\{1,\ldots,m\}$}\}$
for $m\in\mathbb{N}$ and $W_{*}:=\bigcup_{m=1}^{\infty}W_{m}$. For each
$w=w_{1}\ldots w_{m}\in W_{*}$, the unique $m\in\mathbb{N}$ with $w\in W_{m}$
is denoted by $|w|$, and we set $F_{w}:=F_{w_{1}}\circ\cdots\circ F_{w_{m}}$,
$K_{w}:=F_{w}(K)$ and $q^{w}=(q^{w}_{k})_{k=1}^{d}:=F_{w}(\zero_{d})$.
\end{definition}
\begin{lemma}\label{lem:cell-intersection-null}
If $w,v\in W_{*}$, $|w|=|v|$ and $w\not=v$, then
$\mu(F_{w}(K\setminus(0,1)^{d}))=0=\mu(K_{w}\cap K_{v})$.
\end{lemma}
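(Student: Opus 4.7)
The plan is to perform an exact total-mass accounting at level $m := |w| = |v|$. First, iterating the self-similarity identity $K = \bigcup_{i \in S} F_i(K)$ from Framework \ref{frmwrk:GSC} yields the level-$m$ decomposition $K = \bigcup_{u \in W_m} K_u$. Next, iterating the defining relation $\mu = (\#S)\mu\circ F_i$ of the self-similar measure (using the injectivity of each $F_i = f_i|_K$ inherited from that of $f_i:\mathbb{R}^d\to\mathbb{R}^d$) gives the basic identity $\mu(K_u) = (\#S)^{-m}$ for every $u \in W_m$. Since $\#W_m = (\#S)^m$, these combine to yield
\begin{equation*}
\sum_{u \in W_m} \mu(K_u) \;=\; 1 \;=\; \mu(K) \;=\; \mu\Bigl(\bigcup_{u \in W_m} K_u\Bigr).
\end{equation*}

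The second step is to convert this equality into the claimed zero-measure statement for overlaps. I would consider the counting function $N := \sum_{u \in W_m} \one_{K_u}$, which is nonnegative integer-valued on $K$ and satisfies $N \geq \one_{\bigcup_{u \in W_m} K_u}$ pointwise. The display above forces $\int_K \bigl(N - \one_{\bigcup_{u \in W_m} K_u}\bigr)\, d\mu = 0$, so the nonnegative integrand vanishes $\mu$-a.e.; hence $N(x) \in \{0,1\}$ for $\mu$-a.e.\ $x \in K$, meaning that $\mu$-almost every point of $K$ lies in at most one cell $K_u$. Specializing to $u = w$ and $u = v$ gives $\mu(K_w \cap K_v) = 0$, as required.

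I do not foresee any real obstacle here, as the argument is a routine self-similarity computation. The only ingredient worth singling out is the inductive identity $\mu(K_u) = (\#S)^{-|u|}$, which rests on the injectivity of each $F_i$; once this is in hand, the rest is essentially an exercise in elementary measure theory, namely the observation that equality in finite subadditivity of a measure forces pairwise intersections to be null.
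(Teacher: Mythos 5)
Your argument is correct and complete, but it follows a genuinely different route from the paper's. The paper's proof is a one-line geometric localization: since $f_{w}(Q_{0})$ and $f_{v}(Q_{0})$ are distinct cubes of the level-$|w|$ grid, they meet only along their boundaries, whence $K_{w}\cap K_{v}=F_{w}(K\setminus(0,1)^{d})\cap F_{v}(K\setminus(0,1)^{d})$; the conclusion then rests (implicitly) on the $\mu$-nullity of $K\setminus(0,1)^{d}$, which itself is proved by a self-similarity contraction argument of the form $\mu(K\cap\{x_{1}=0\})=\#\{i\in S\mid i_{1}=0\}\,(\#S)^{-1}\mu(K\cap\{x_{1}=0\})$ with $\#\{i\in S\mid i_{1}=0\}<\#S$, and similarly for the other faces. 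Your proof instead does a global mass count: the $(\#S)^{m}$ cells of $W_{m}$ each carry mass exactly $(\#S)^{-m}$, so $\sum_{u\in W_{m}}\mu(K_{u})=1=\mu\bigl(\bigcup_{u\in W_{m}}K_{u}\bigr)$, and equality in finite subadditivity (via the counting function $N=\sum_{u}\one_{K_{u}}$) forces every pairwise overlap to be $\mu$-null. What your version buys is self-containedness: it uses nothing beyond the defining identity $\mu=(\#S)\mu\circ F_{i}$ and elementary measure theory, and in particular it sidesteps the boundary-nullity fact that the paper's ``follows easily'' quietly presupposes. What the paper's version buys is sharper information, namely \emph{where} the overlap sits (inside images of $K\cap\partial Q_{0}$), which is not needed for this lemma. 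The one point you rightly flag, and which should stay explicit, is that $\mu\circ F_{i}$ is a measure only because $F_{i}$ is injective, so that $\mu(F_{i}(A))=(\#S)^{-1}\mu(A)$ and hence $\mu(K_{u})=(\#S)^{-|u|}$ by iteration.
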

\begin{proof}
This follows easily from \ref{GSC1} and the fact that $\mu$ is a Borel probability
measure on $K$ satisfying $\mu(K_{w})=(\#S)^{-|w|}$ for any $w\in W_{*}$.
\end{proof}
%
%
%
\begin{lemma}\label{lem:Fw-star}
Let $w\in W_{*}$. Then for any Borel measurable function $u\colon K\to[-\infty,\infty]$,
$\int_{K}|u\circ F_{w}|\,d\mu=(\#S)^{|w|}\int_{K_{w}}|u|\,d\mu$ and
$\int_{K_{w}}|u\circ F_{w}^{-1}|\,d\mu=(\#S)^{-|w|}\int_{K}|u|\,d\mu$. In particular, bounded
linear operators $F_{w}^{*},(F_{w})_{*}\colon L^{2}(K,\mu)\to L^{2}(K,\mu)$ can be defined by setting
\begin{equation}\label{eq:Fw-star}
F_{w}^{*}u:=u\circ F_{w}\qquad\textrm{and}\qquad
	(F_{w})_{*}u:=
	\begin{cases}
		u\circ F_{w}^{-1}&\textrm{on $K_{w}$,}\\
		0&\textrm{on $K\setminus K_{w}$}
	\end{cases}
\end{equation}
for each $u\in L^{2}(K,\mu)$. Moreover, $u\circ F_{w}\in\mathcal{F}$
and \eqref{eq:GSCDF3} holds for any $u\in\mathcal{F}$.
\end{lemma}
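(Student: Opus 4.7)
The plan is to first establish the integral change-of-variable formulas by iterating the self-similarity of $\mu$, then to deduce the boundedness of $F_{w}^{*}$ and $(F_{w})_{*}$ on $L^{2}(K,\mu)$, and finally to extend the self-similarity identity \eqref{eq:GSCDF3} from $\mathcal{F}\cap\mathcal{C}(K)$ to all of $\mathcal{F}$ via the regularity of $(\mathcal{E},\mathcal{F})$.

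First I would iterate the defining relation $\mu=(\#S)\,\mu\circ F_{i}$ from Framework \ref{frmwrk:GSC} to obtain $\mu(F_{w}(A))=(\#S)^{-|w|}\mu(A)$ for every $w\in W_{*}$ and every Borel $A\subset K$. Since $F_{w}:K\to K_{w}$ is a homeomorphism, a standard simple-function approximation then yields $\int_{K}|u\circ F_{w}|\,d\mu=(\#S)^{|w|}\int_{K_{w}}|u|\,d\mu$ for any Borel $u:K\to[-\infty,\infty]$. Replacing $u$ by any Borel extension to $K$ of $u\circ F_{w}^{-1}:K_{w}\to[-\infty,\infty]$ (for instance by $0$ outside $K_{w}$) produces the second identity $\int_{K_{w}}|u\circ F_{w}^{-1}|\,d\mu=(\#S)^{-|w|}\int_{K}|u|\,d\mu$. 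Applied to $|u|^{2}$, these two formulas immediately show that $F_{w}^{*}$ and $(F_{w})_{*}$ extend to bounded linear operators on $L^{2}(K,\mu)$ of norm at most $(\#S)^{|w|/2}$ and $(\#S)^{-|w|/2}$, respectively.

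Next, by a straightforward induction on $m=|w|$ using \ref{GSCDF2} and \ref{GSCDF3}, I would verify that for every $u\in\mathcal{F}\cap\mathcal{C}(K)$ and every $m\in\mathbb{N}$, one has $u\circ F_{w}\in\mathcal{F}\cap\mathcal{C}(K)$ for each $w\in W_{m}$ together with the iterated identity $\mathcal{E}(u,u)=\sum_{w\in W_{m}}r^{-m}\mathcal{E}(u\circ F_{w},u\circ F_{w})$. Since all summands are nonnegative, this yields in particular the comparison $\mathcal{E}(u\circ F_{w},u\circ F_{w})\leq r^{|w|}\mathcal{E}(u,u)$ for every individual $w\in W_{m}$.

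The main obstacle, and the only step that is not a routine calculation, is extending both $u\circ F_{w}\in\mathcal{F}$ and \eqref{eq:GSCDF3} from $\mathcal{F}\cap\mathcal{C}(K)$ to general $u\in\mathcal{F}$. Here I would exploit the regularity of $(\mathcal{E},\mathcal{F})$ stated in Theorem \ref{thm:GSCDF}, which ensures that $\mathcal{F}\cap\mathcal{C}(K)$ is dense in $\mathcal{F}$ with respect to the inner product $\mathcal{E}_{1}(v,v):=\mathcal{E}(v,v)+\|v\|_{L^{2}(K,\mu)}^{2}$. Choosing $u_{n}\in\mathcal{F}\cap\mathcal{C}(K)$ with $u_{n}\to u$ in $\mathcal{E}_{1}$, the comparison from the previous paragraph applied to $v=u_{n}-u_{m}$ together with the $L^{2}$-boundedness of $F_{w}^{*}$ shows that $\{u_{n}\circ F_{w}\}_{n}$ is both $\mathcal{E}_{1}$-Cauchy and $L^{2}$-convergent to $u\circ F_{w}$. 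Completeness of $(\mathcal{F},\mathcal{E}_{1})$ then places $u\circ F_{w}$ in $\mathcal{F}$, and passing to the limit in $\mathcal{E}(u_{n},u_{n})=\sum_{i\in S}r^{-1}\mathcal{E}(u_{n}\circ F_{i},u_{n}\circ F_{i})$ via $\mathcal{E}_{1}$-continuity of $\mathcal{E}$ yields \eqref{eq:GSCDF3} for $u$.
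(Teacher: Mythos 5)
Your proposal is correct and follows essentially the same route as the paper: the integral identities come from iterating $\mu=(\#S)\,\mu\circ F_{i}$, and the extension of $u\circ F_{w}\in\mathcal{F}$ and \eqref{eq:GSCDF3} to general $u\in\mathcal{F}$ is obtained by approximating $u$ in $(\mathcal{F},\mathcal{E}_{1})$ by elements of $\mathcal{F}\cap\mathcal{C}(K)$, using \ref{GSCDF2} and \ref{GSCDF3} to see that $\{u_{n}\circ F_{w}\}_{n}$ is $\mathcal{E}_{1}$-Cauchy with $L^{2}$-limit $u\circ F_{w}$, and invoking completeness. The only difference is that you spell out the iterated bound $\mathcal{E}(u\circ F_{w},u\circ F_{w})\leq r^{|w|}\mathcal{E}(u,u)$ explicitly, which the paper leaves implicit.
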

\begin{proof}
The former assertions are immediate from $\mu=(\#S)^{|w|}\mu\circ F_{w}$.
For the latter ones, let $u\in\mathcal{F}$.
Since $\mathcal{F}\cap\mathcal{C}(K)$ is dense in the Hilbert space
$(\mathcal{F},\mathcal{E}_{1}=\mathcal{E}+\langle\cdot,\cdot\rangle)$
by the regularity of $(\mathcal{E},\mathcal{F})$,
we can choose $\{u_{n}\}_{n=1}^{\infty}\subset\mathcal{F}\cap\mathcal{C}(K)$
so that $\lim_{n\to\infty}\mathcal{E}_{1}(u-u_{n},u-u_{n})=0$, and then
$\{u_{n}\circ F_{w}\}_{n=1}^{\infty}$ is a Cauchy sequence in $(\mathcal{F},\mathcal{E}_{1})$
with $\lim_{n\to\infty}\|u\circ F_{w}-u_{n}\circ F_{w}\|_{2}=0$ by \ref{GSCDF2} and
\ref{GSCDF3} and therefore has to converge to $u\circ F_{w}$ in norm in $(\mathcal{F},\mathcal{E}_{1})$.
Thus $u\circ F_{w}\in\mathcal{F}$, and \eqref{eq:GSCDF3} for $u$ follows by letting
$n\to\infty$ in \eqref{eq:GSCDF3} for $u_{n}\in\mathcal{F}\cap\mathcal{C}(K)$.
\end{proof}
\begin{lemma}\label{lem:conservative}
$\mathcal{E}(\one_{K},v)=0$ for any $v\in\mathcal{F}$.
\end{lemma}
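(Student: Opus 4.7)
The plan is to exploit the conservativeness of $(\mathcal{E},\mathcal{F})$ through the standard correspondence between a Dirichlet form, its strongly continuous Markovian semigroup $\{T_{t}\}_{t>0}$ on $L^{2}(K,\mu)$ and its resolvent $\{G_{\alpha}\}_{\alpha>0}$, as set up in \cite[Section 1.3]{FOT}. Since $\mu$ is a probability measure we have $\one_{K}\in L^{2}(K,\mu)$, so everything below will make sense within the $L^{2}$ framework.

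First I would record what conservativeness gives us: by definition, $T_{t}\one_{K}=\one_{K}$ in $L^{2}(K,\mu)$ for every $t>0$. Integrating this identity against $e^{-\alpha t}\,dt$ over $(0,\infty)$ and using the representation $G_{\alpha}=\int_{0}^{\infty}e^{-\alpha t}T_{t}\,dt$ (in the strong sense on $L^{2}$) yields
\begin{equation*}
G_{\alpha}\one_{K}=\alpha^{-1}\one_{K}\qquad\text{for every }\alpha\in(0,\infty).
\end{equation*}
Since $G_{\alpha}$ maps $L^{2}(K,\mu)$ into $\mathcal{F}$ (this is part of the basic correspondence in \cite[Section 1.3]{FOT}), the identity above shows that $\one_{K}=\alpha G_{\alpha}\one_{K}\in\mathcal{F}$, which is the first assertion.

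For the second assertion I would invoke the characterizing identity $\mathcal{E}_{\alpha}(G_{\alpha}f,v)=\langle f,v\rangle_{L^{2}(K,\mu)}$ for all $f\in L^{2}(K,\mu)$ and $v\in\mathcal{F}$, where $\mathcal{E}_{\alpha}:=\mathcal{E}+\alpha\langle\cdot,\cdot\rangle_{L^{2}(K,\mu)}$. Taking $f=\one_{K}$ and using $G_{\alpha}\one_{K}=\alpha^{-1}\one_{K}$ gives
\begin{equation*}
\alpha^{-1}\mathcal{E}(\one_{K},v)+\langle\one_{K},v\rangle_{L^{2}(K,\mu)}=\langle\one_{K},v\rangle_{L^{2}(K,\mu)},
\end{equation*}
so $\mathcal{E}(\one_{K},v)=0$ for every $v\in\mathcal{F}$, as required. (Equivalently, one could first set $v=\one_{K}$ to deduce $\mathcal{E}(\one_{K},\one_{K})=0$ and then apply the Cauchy--Schwarz inequality $|\mathcal{E}(\one_{K},v)|^{2}\leq\mathcal{E}(\one_{K},\one_{K})\mathcal{E}(v,v)$.)

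There is really no substantive obstacle here: the argument is a direct unwinding of what conservativeness means at the level of the semigroup, transported to the form via the resolvent. The only step that deserves a moment of care is the passage $T_{t}\one_{K}=\one_{K}\Rightarrow G_{\alpha}\one_{K}=\alpha^{-1}\one_{K}$, which is immediate from the Bochner-integral definition of $G_{\alpha}$ and the finiteness of $\mu$ (so that $\one_{K}\in L^{2}(K,\mu)$ and the constants $T_{t}\one_{K}$ are integrable against $e^{-\alpha t}\,dt$ in the $L^{2}$-sense).
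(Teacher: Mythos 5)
Your argument is correct and is essentially the paper's proof written out in full: the paper simply cites \cite[Lemma 1.3.4-(i)]{FOT} together with conservativeness and $\one_{K}\in L^{2}(K,\mu)$, and the chain $T_{t}\one_{K}=\one_{K}\Rightarrow\alpha G_{\alpha}\one_{K}=\one_{K}\Rightarrow\one_{K}\in\mathcal{F}$ with $\mathcal{E}(\one_{K},v)=0$ via $\mathcal{E}_{\alpha}(G_{\alpha}f,v)=\langle f,v\rangle_{L^{2}(K,\mu)}$ is exactly the content of that cited lemma. No gaps.
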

\begin{proof}
This is immediate from the Cauchy--Schwarz inequality for $\mathcal{E}$ and
$\mathcal{E}(\one_{K},\one_{K})=0$.
\end{proof}
\begin{definition}\label{dfn:part-harmonic}
Let $U$ be a non-empty open subset of $K$.
\begin{enumerate}[label=\textup{(\arabic*)},align=left,leftmargin=*,topsep=2pt,parsep=0pt,itemsep=2pt]
\item Equipping $\mathcal{F}$ with the inner product
	$\mathcal{E}_{1}=\mathcal{E}+\langle\cdot,\cdot\rangle$, we define
	\begin{equation}\label{eq:part}
	\mathcal{C}_{U}:=\{u\in\mathcal{F}\cap\mathcal{C}(K)\mid\supp_{K}[u]\subset U\}
		\qquad\textrm{and}\qquad
		\mathcal{F}_{U}:=\overline{\mathcal{C}_{U}}^{\mathcal{F}},
	\end{equation}
	which are linear subspaces of $\mathcal{F}$, and for each $u\in\mathcal{F}$ we also set
	$u+\mathcal{C}_{U}:=\{u+v\mid v\in\mathcal{C}_{U}\}$ and $u+\mathcal{F}_{U}:=\{u+v\mid v\in\mathcal{F}_{U}\}$,
	so that $\overline{u+\mathcal{C}_{U}}^{\mathcal{F}}=u+\mathcal{F}_{U}$.
\item A function $h\in\mathcal{F}$ is said to be \emph{$\mathcal{E}$-harmonic} on $U$
	if and only if either of the following two conditions, which are easily seen
	to be equivalent to each other, holds:
	\begin{gather}\label{eq:harmonic-var}
	\mathcal{E}(h,h)=\inf\{\mathcal{E}(u,u)\mid u\in h+\mathcal{F}_{U}\},\\
	\mathcal{E}(h,v)=0\quad\textrm{for any $v\in\mathcal{C}_{U}$, or equivalently, for any $v\in\mathcal{F}_{U}$,}
	\label{eq:harmonic-weak}
	\end{gather}
	where the equivalence stated in \eqref{eq:harmonic-weak} is immediate from \eqref{eq:part}.
\end{enumerate}
\end{definition}
\begin{definition}\label{dfn:GSC-V00V01-isometry-subgroup}
\begin{enumerate}[label=\textup{(\arabic*)},align=left,leftmargin=*,topsep=2pt,parsep=0pt,itemsep=2pt]
\item We set $V_{0}^{\varepsilon}:=K\cap(\{\varepsilon\}\times\mathbb{R}^{d-1})$ for each
	$\varepsilon\in\{0,1\}$ and $U_{0}:=K\setminus(V_{0}^{0}\cup V_{0}^{1})$.
\item We fix an arbitrary $\varphi_{0}\in\mathcal{C}_{K\setminus V_{0}^{0}}$ with
	$\supp_{K}[\one_{K}-\varphi_{0}]\subset K\setminus V_{0}^{1}$, which exists by \cite[Exercise 1.4.1]{FOT};
	note that $\varphi_{0}+\mathcal{C}_{U_{0}},\varphi_{0}+\mathcal{F}_{U_{0}}$
	are independent of a particular choice of \nolinebreak$\varphi_{0}$.
\item We define $g_{\varepsilon}\in\mathcal{G}_{0}$ by $g_{\varepsilon}:=\tau_{\varepsilon}|_{K}$
	for each $\varepsilon=(\varepsilon_{k})_{k=1}^{d}\in\{0,1\}^{d}$,
	where $\tau_{\varepsilon}\colon\mathbb{R}^{d}\to\mathbb{R}^{d}$ is given by
	$\tau_{\varepsilon}((x_{k})_{k=1}^{d}):=(\varepsilon_{k}+(1-2\varepsilon_{k})x_{k})_{k=1}^{d}$,
	and define a subgroup $\mathcal{G}_{1}$ of $\mathcal{G}_{0}$ by
	\begin{equation}\label{eq:GSC-isometry-subgroup}
	\mathcal{G}_{1}:=\{g_{\varepsilon}\mid\varepsilon\in\{0\}\times\{0,1\}^{d-1}\}.
	\end{equation}
\end{enumerate}
\end{definition}
Now we proceed to the core part of the proof of Theorem \ref{thm:dwSC}.
It is divided into three propositions, proving respectively the existence of a good sequence
$\{u_{n}\}_{n=1}^{\infty}\subset\mathcal{F}\cap\mathcal{C}(K)$ converging in norm in
$(\mathcal{F},\mathcal{E}_{1})$ to $h_{0}\in\varphi_{0}+\mathcal{F}_{U_{0}}$
which is $\mathcal{E}$-harmonic on $U_{0}$ (Proposition \ref{prop:h0-approx}),
$\mathcal{E}(h_{0},h_{0})>0$ (Proposition \ref{prop:h0-nonzero})
and the \emph{non}-$\mathcal{E}$-harmonicity on $U_{0}$ of
$h_{2}:=\sum_{w\in W_{2}}(F_{w})_{*}(l^{-2}h_{0}+q^{w}_{1}\one_{K})\in h_{0}+\mathcal{F}_{U_{0}}$
(Proposition \ref{prop:h2-non-harmonic}); see Figures \ref{fig:h0U0-dfn} and \ref{fig:hm-dfn}
below for an illustration of $h_{0}$ and $h_{2}$. Then Theorem \ref{thm:dwSC} will follow
from $\mathcal{E}(h_{0},h_{0})<\mathcal{E}(h_{2},h_{2})$ and \eqref{eq:GSCDF3}
for $u\in\mathcal{F}$. While the existence of such $h_{0}$ is implied by
\cite[Theorems 7.2.1, 4.6.5, 1.5.2-(iii), A.2.6-(i), 4.1.3, 4.2.1-(ii) and Corollary 2.3.1]{FOT},
that of $\{u_{n}\}_{n=1}^{\infty}\subset\mathcal{F}\cap\mathcal{C}(K)$ as in the following proposition
cannot be obtained directly from the theory of regular symmetric Dirichlet forms in \cite{FOT,CF}.
\begin{proposition}\label{prop:h0-approx}
There exist $h_{0}\in\mathcal{F}$ and $\{u_{n}\}_{n=1}^{\infty}\subset\mathcal{F}\cap\mathcal{C}(K)$
satisfying the following:
\begin{enumerate}[label=\textup{(\arabic*)},align=left,leftmargin=*,topsep=2pt,parsep=0pt,itemsep=2pt]
\item\label{it:h0}$h_{0}$ is $\mathcal{E}$-harmonic on $U_{0}$ and $h_{0}\in\varphi_{0}+\mathcal{F}_{U_{0}}$.
	In particular, $h_{0}+\mathcal{F}_{U_{0}}=\varphi_{0}+\mathcal{F}_{U_{0}}$.
\item\label{it:un}For each $n\in\mathbb{N}$, $u_{n}\circ g=u_{n}$ for any $g\in\mathcal{G}_{1}$
	and $u_{n}\in\varphi_{0}+\mathcal{C}_{U_{0}}$.
\item\label{it:h0-approx}$\lim_{n\to\infty}\mathcal{E}_{1}(h_{0}-u_{n},h_{0}-u_{n})=0$.
\end{enumerate}
\end{proposition}
\begin{proof}
Recalling \eqref{eq:harmonic-var}, for each $\alpha\in[0,\infty)$ we set
\begin{equation}\label{eq:CapAlphaV00V01}
a_{\alpha}:=\inf\bigl\{\mathcal{E}(u,u)+\alpha\|u\|_{2}^{2}\bigm|u\in\varphi_{0}+\mathcal{C}_{U_{0}}\bigr\}
	=\inf\bigl\{\mathcal{E}(u,u)+\alpha\|u\|_{2}^{2}\bigm|u\in\varphi_{0}+\mathcal{F}_{U_{0}}\bigr\},
\end{equation}
where the latter equality in \eqref{eq:CapAlphaV00V01} is immediate from
$\overline{\varphi_{0}+\mathcal{C}_{U_{0}}}^{\mathcal{F}}=\varphi_{0}+\mathcal{F}_{U_{0}}$.
Then for any $\alpha\in[0,\infty)$ and any $u\in\varphi_{0}+\mathcal{C}_{U_{0}}$,
by the unit contraction operating on $(\mathcal{E},\mathcal{F})$ (see \cite[Section 1.1 and Theorem 1.4.1]{FOT})
we have $u^{+}\wedge 1\in\varphi_{0}+\mathcal{C}_{U_{0}}$ and
\begin{equation*}
\mathcal{E}(u,u)\geq\mathcal{E}(u^{+}\wedge 1,u^{+}\wedge 1)
	\geq\mathcal{E}(u^{+}\wedge 1,u^{+}\wedge 1)+\alpha\|u^{+}\wedge 1\|_{2}^{2}-\alpha
	\geq a_{\alpha}-\alpha,
\end{equation*}
and hence $a_{0}\geq a_{\alpha}-\alpha$, so that for each $n\in\mathbb{N}$
we can take $v_{n}\in\varphi_{0}+\mathcal{C}_{U_{0}}$ such that
\begin{equation}\label{eq:a0-approx-core}
\mathcal{E}(v_{n}^{+}\wedge 1,v_{n}^{+}\wedge 1)
	\leq\mathcal{E}(v_{n},v_{n})+n^{-1}\|v_{n}\|_{2}^{2}
	<a_{n^{-1}}+n^{-1}\leq a_{0}+2n^{-1}.
\end{equation}
Recalling \ref{GSCDF1}, now for each $n\in\mathbb{N}$ we can define
$u_{n}\in\mathcal{F}\cap\mathcal{C}(K)$ with the properties in \ref{it:un} by
$u_{n}:=(\#\mathcal{G}_{1})^{-1}\sum_{g\in\mathcal{G}_{1}}(v_{n}^{+}\wedge 1)\circ g$
and see from the triangle inequality for $\mathcal{F}\ni u\mapsto\mathcal{E}(u,u)^{1/2}$,
$\mathcal{E}((v_{n}^{+}\wedge 1)\circ g,(v_{n}^{+}\wedge 1)\circ g)=\mathcal{E}(v_{n}^{+}\wedge 1,v_{n}^{+}\wedge 1)$
for $g\in\mathcal{G}_{1}$ and \eqref{eq:a0-approx-core} that
\begin{equation}\label{eq:a0-approx-core-sym}
\mathcal{E}(u_{n},u_{n})\leq\mathcal{E}(v_{n}^{+}\wedge 1,v_{n}^{+}\wedge 1)<a_{0}+2n^{-1}.
\end{equation}
Further, since $\|u_{n}\|_{2}\leq 1$ by $0\leq u_{n}\leq 1$ for any $n\in\mathbb{N}$, the
Banach--Saks theorem \cite[Theorem A.4.1-(i)]{CF} yields $h_{0}\in L^{2}(K,\mu)$ and a strictly
increasing sequence $\{j_{k}\}_{k=1}^{\infty}\subset\mathbb{N}$ such that the Ces\`{a}ro
mean sequence $\{\overline{u}_{n}\}_{n=1}^{\infty}\subset\mathcal{F}\cap\mathcal{C}(K)$
of $\{u_{j_{k}}\}_{k=1}^{\infty}$ given by $\overline{u}_{n}:=n^{-1}\sum_{k=1}^{n}u_{j_{k}}$
satisfies $\lim_{n\to\infty}\|h_{0}-\overline{u}_{n}\|_{2}=0$.
Then \ref{it:un} obviously holds for $\{\overline{u}_{n}\}_{n=1}^{\infty}$,
and it follows from \eqref{eq:CapAlphaV00V01} and \eqref{eq:a0-approx-core-sym}
that $\lim_{n\to\infty}\mathcal{E}(\overline{u}_{n},\overline{u}_{n})=a_{0}$
and that for any $n,k\in\mathbb{N}$,
\begin{align*}
\mathcal{E}(\overline{u}_{n}-\overline{u}_{k},\overline{u}_{n}-\overline{u}_{k})
	&=2\mathcal{E}(\overline{u}_{n},\overline{u}_{n})+2\mathcal{E}(\overline{u}_{k},\overline{u}_{k})
		-4\mathcal{E}((\overline{u}_{n}+\overline{u}_{k})/2,(\overline{u}_{n}+\overline{u}_{k})/2)\\
&\leq 2\mathcal{E}(\overline{u}_{n},\overline{u}_{n})+2\mathcal{E}(\overline{u}_{k},\overline{u}_{k})-4a_{0}
	\xrightarrow{n\wedge k\to\infty}0,
\end{align*}
which together with $\lim_{n\to\infty}\|h_{0}-\overline{u}_{n}\|_{2}=0$ and
the completeness of $(\mathcal{F},\mathcal{E}_{1})$ implies that $h_{0}\in\mathcal{F}$ and
$\lim_{n\to\infty}\mathcal{E}_{1}(h_{0}-\overline{u}_{n},h_{0}-\overline{u}_{n})=0$. Thus
$h_{0}\in\overline{\varphi_{0}+\mathcal{C}_{U_{0}}}^{\mathcal{F}}
	=\varphi_{0}+\mathcal{F}_{U_{0}}=h_{0}+\mathcal{F}_{U_{0}}$
by $\{\overline{u}_{n}\}_{n=1}^{\infty}\subset\varphi_{0}+\mathcal{C}_{U_{0}}$,
$\mathcal{E}(h_{0},h_{0})=\lim_{n\to\infty}\mathcal{E}(\overline{u}_{n},\overline{u}_{n})=a_{0}$,
and therefore $h_{0}$ is $\mathcal{E}$-harmonic on $U_{0}$
in view of \eqref{eq:CapAlphaV00V01} and \eqref{eq:harmonic-var}, completing the proof.
\end{proof}
%
\begin{figure}[t]\centering
	\begin{minipage}{.49\linewidth}\centering%
		\hspace*{-3pt}\includegraphics[height=200pt]{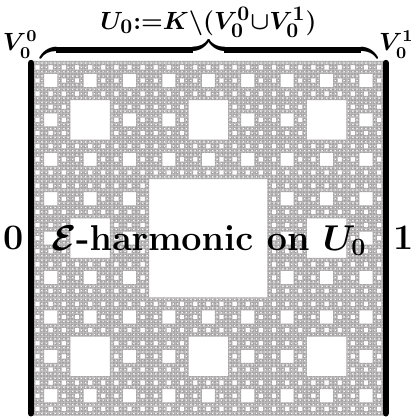}%
		\caption{The choice of $h_{0}\in\mathcal{F}$}%
		\label{fig:h0U0-dfn}%
	\end{minipage}%
	\hspace*{6pt}%
	\begin{minipage}{.495\linewidth}\centering%
		\includegraphics[height=200pt]{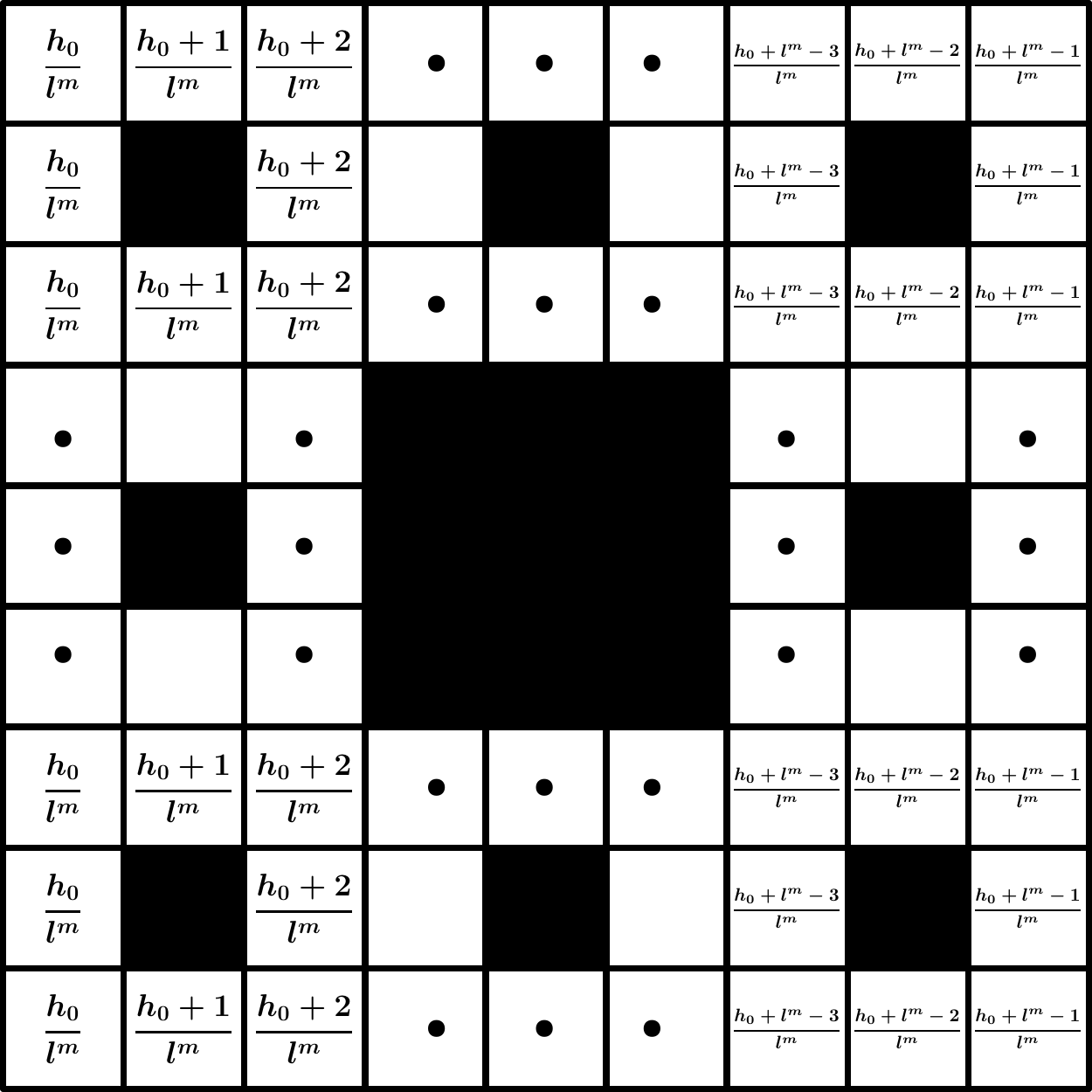}%
		\caption{The construction of $h_{m}\in h_{0}+\mathcal{F}_{U_{0}}$}%
		\label{fig:hm-dfn}%
	\end{minipage}%
\end{figure}
We need the following two lemmas for the remaining two propositions and their proofs.
\begin{lemma}\label{lem:hm-domain-boundary-value}
Let $h_{0}\in\mathcal{F}$ be as in Proposition \textup{\ref{prop:h0-approx}},
let $m\in\mathbb{N}$ and define $h_{m}\in L^{2}(K,\mu)$ by
\begin{equation}\label{eq:hm-dfn}
h_{m}:=\sum_{w\in W_{m}}(F_{w})_{*}(l^{-m}h_{0}+q^{w}_{1}\one_{K})
\end{equation}
\textup{(see Figure \ref{fig:hm-dfn} below for an illustration of \eqref{eq:hm-dfn})}.
Then $h_{m}\in h_{0}+\mathcal{F}_{U_{0}}$.
\end{lemma}
\begin{proof}
Let $\{u_{n}\}_{n=1}^{\infty}\subset\mathcal{F}\cap\mathcal{C}(K)$ be as in Proposition \ref{prop:h0-approx}.
For each $n\in\mathbb{N}$, since $u_{n}\circ g=u_{n}$ for any $g\in\mathcal{G}_{1}$,
$u_{n}\in\varphi_{0}+\mathcal{C}_{U_{0}}$ and hence
$u_{n}|_{V_{0}^{0}\cup V_{0}^{1}}=\varphi_{0}|_{V_{0}^{0}\cup V_{0}^{1}}=\one_{V_{0}^{1}}$
by Proposition \ref{prop:h0-approx}-\ref{it:un}, we can define $u_{m,n}\in\mathcal{C}(K)$
by setting $u_{m,n}|_{K_{w}}:=(l^{-m}u_{n}+q^{w}_{1}\one_{K})\circ F_{w}^{-1}$ for each $w\in W_{m}$,
so that $u_{m,n}\circ F_{w}=l^{-m}u_{n}+q^{w}_{1}\one_{K}\in\mathcal{F}$ by $\one_{K}\in\mathcal{F}$
and thus $u_{m,n}\in\varphi_{0}+\mathcal{C}_{U_{0}}$ by \ref{GSCDF2} and $u_{n}\in\varphi_{0}+\mathcal{C}_{U_{0}}$.
Then we see from \ref{GSCDF3}, Lemmas \ref{lem:cell-intersection-null}, \ref{lem:Fw-star}
and Proposition \ref{prop:h0-approx}-\ref{it:h0-approx} that $\{u_{m,n}\}_{n=1}^{\infty}$
is a Cauchy sequence in the Hilbert space $(\mathcal{F},\mathcal{E}_{1})$ with
$\lim_{n\to\infty}\|h_{m}-u_{m,n}\|_{2}=0$ and therefore has to converge to
$h_{m}$ in norm in $(\mathcal{F},\mathcal{E}_{1})$, whence
$h_{m}\in\overline{\varphi_{0}+\mathcal{C}_{U_{0}}}^{\mathcal{F}}
	=\varphi_{0}+\mathcal{F}_{U_{0}}=h_{0}+\mathcal{F}_{U_{0}}$
by $\{u_{m,n}\}_{n=1}^{\infty}\subset\varphi_{0}+\mathcal{C}_{U_{0}}$
and Proposition \ref{prop:h0-approx}-\ref{it:h0}.
\end{proof}
\begin{lemma}\label{lem:coordinate-func}
Let $k\in\{1,2,\ldots,d\}$ and define $f_{k}\in\mathcal{C}(\mathbb{R}^{d})$ by
$f_{k}((x_{j})_{j=1}^{d}):=x_{k}$. Then either $f_{k}|_{K}\in\mathcal{F}$ and
$\mathcal{E}(f_{k}|_{K},f_{k}|_{K})>0$ or $f_{k}|_{K}\not\in\mathcal{F}$.
\end{lemma}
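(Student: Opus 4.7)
The plan is to argue by contradiction. Assume $f_{k}|_{K}\in\mathcal{F}$ and $\mathcal{E}(f_{k}|_{K},f_{k}|_{K})=0$; the goal is to derive that $\mathcal{E}$ vanishes on all of $\mathcal{F}$, contradicting the non-degeneracy requirement $\mathcal{E}(u,u)>0$ for some $u\in\mathcal{F}$ built into Theorem \ref{thm:GSCDF}.

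First I would spread the zero energy across all coordinate directions. For each $j\in\{1,\ldots,d\}$, the transposition of the $k$-th and $j$-th coordinates in $\mathbb{R}^{d}$ is an isometry $\tau$ with $\tau(Q_{0})=Q_{0}$, so $g:=\tau|_{K}\in\mathcal{G}_{0}$ and $f_{k}|_{K}\circ g=f_{j}|_{K}$; since $f_{k}|_{K}\in\mathcal{F}\cap\mathcal{C}(K)$, \ref{GSCDF1} gives $f_{j}|_{K}\in\mathcal{F}\cap\mathcal{C}(K)$ with $\mathcal{E}(f_{j}|_{K},f_{j}|_{K})=0$ for every $j$. Next, the product rule \cite[Theorem 1.4.2-(ii)]{FOT} says that $\mathcal{F}\cap L^{\infty}(K,\mu)$ is an algebra with $\mathcal{E}(uv,uv)^{1/2}\leq\|u\|_{L^{\infty}(K,\mu)}\mathcal{E}(v,v)^{1/2}+\|v\|_{L^{\infty}(K,\mu)}\mathcal{E}(u,u)^{1/2}$; combined with $\one_{K}\in\mathcal{F}$ of zero energy (Lemma \ref{lem:conservative}), induction shows every real polynomial in $f_{1}|_{K},\ldots,f_{d}|_{K}$ lies in $\mathcal{F}\cap\mathcal{C}(K)$ with zero $\mathcal{E}$-energy.

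Then I would invoke Stone--Weierstrass: the algebra of such polynomials contains $\one_{K}$ and separates points of $K\subset\mathbb{R}^{d}$, hence is uniformly dense in $\mathcal{C}(K)$. Given $u\in\mathcal{C}(K)$, choose polynomials $p_{n}\to u$ uniformly, hence in $L^{2}(K,\mu)$; Cauchy--Schwarz gives $\mathcal{E}(p_{n},p_{m})=0$, so $\mathcal{E}_{1}(p_{n}-p_{m},p_{n}-p_{m})=\|p_{n}-p_{m}\|_{L^{2}(K,\mu)}^{2}\to 0$, and completeness of the Hilbert space $(\mathcal{F},\mathcal{E}_{1})$ forces $u\in\mathcal{F}$ with $\mathcal{E}(u,u)=0$. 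Thus $\mathcal{C}(K)\subset\mathcal{F}$ and $\mathcal{E}|_{\mathcal{C}(K)}\equiv 0$; regularity of $(\mathcal{E},\mathcal{F})$ makes $\mathcal{F}\cap\mathcal{C}(K)=\mathcal{C}(K)$ dense in $(\mathcal{F},\mathcal{E}_{1})$, so $\mathcal{E}_{1}$-approximation and the $\mathcal{E}_{1}$-continuity of $\mathcal{E}$ propagate the vanishing to all of $\mathcal{F}$, yielding the desired contradiction.

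The main obstacle is recognizing that self-similarity \ref{GSCDF3} by itself is insufficient---applied to $f_{k}|_{K}$ it only yields the dichotomy $\mathcal{E}(f_{k}|_{K},f_{k}|_{K})=0$ or $d_{\mathrm{w}}=2$, with no non-trivial lower bound on the energy---so the contradiction must instead come from combining hypercubic symmetry \ref{GSCDF1}, the product rule for $\mathcal{F}\cap L^{\infty}(K,\mu)$, and Stone--Weierstrass density, to promote a single zero-energy coordinate function to zero energy everywhere.
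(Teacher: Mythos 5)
Your proposal is correct and follows essentially the same route as the paper's proof: spread the zero energy over all coordinates via \ref{GSCDF1}, use the product rule of \cite[Theorem 1.4.2-(ii)]{FOT} together with Lemma \ref{lem:conservative} to kill the energy of all polynomials, and then combine Stone--Weierstrass with the completeness of $(\mathcal{F},\mathcal{E}_{1})$ to contradict the non-degeneracy of $\mathcal{E}$. The only (immaterial) difference is that the paper approximates a single continuous $u$ with $\mathcal{E}(u,u)>0$ directly by polynomials, whereas you first absorb all of $\mathcal{C}(K)$ into the zero-energy part and then propagate by density.
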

\begin{proof}
Suppose to the contrary that $f_{k}|_{K}\in\mathcal{F}$ and $\mathcal{E}(f_{k}|_{K},f_{k}|_{K})=0$.
Then $f|_{K}\in\mathcal{F}$ and $\mathcal{E}(f|_{K},f|_{K})=0$ for any
$f\in\{\one_{\mathbb{R}^{d}},f_{1},f_{2},\ldots,f_{d}\}$ by $\one_{K}\in\mathcal{F}$,
$\mathcal{E}(\one_{K},\one_{K})=0$ and \ref{GSCDF1}, and hence also for any polynomial $f\in\mathcal{C}(\mathbb{R}^{d})$
since for any $u,v\in\mathcal{F}\cap\mathcal{C}(K)$ we have $uv\in\mathcal{F}\cap\mathcal{C}(K)$ and
$\mathcal{E}(uv,uv)^{1/2}\leq\|u\|_{\sup}\mathcal{E}(v,v)^{1/2}+\|v\|_{\sup}\mathcal{E}(u,u)^{1/2}$
by \cite[Theorem 1.4.2-(ii)]{FOT}. On the other hand, $\mathcal{E}(u,u)>0$ for some
$u\in\mathcal{F}\cap\mathcal{C}(K)$ by the existence of such $u\in\mathcal{F}$
and the denseness of $\mathcal{F}\cap\mathcal{C}(K)$ in $(\mathcal{F},\mathcal{E}_{1})$,
the Stone--Weierstrass theorem \cite[Theorem 2.4.11]{Dud} implies that
$\lim_{n\to\infty}\|u-f_{n}|_{K}\|_{\sup}=0$ for some sequence
$\{f_{n}\}_{n=1}^{\infty}\subset\mathcal{C}(\mathbb{R}^{d})$ of polynomials, then
$\lim_{n\to\infty}\|u-f_{n}|_{K}\|_{2}=0$ and
$\lim_{n\wedge k\to\infty}\mathcal{E}_{1}(f_{n}|_{K}-f_{k}|_{K},f_{n}|_{K}-f_{k}|_{K})=0$.
Thus $\lim_{n\to\infty}\mathcal{E}_{1}(u-f_{n}|_{K},u-f_{n}|_{K})=0$ by the completeness
of $(\mathcal{F},\mathcal{E}_{1})$ and therefore
$0<\mathcal{E}(u,u)=\lim_{n\to\infty}\mathcal{E}(f_{n}|_{K},f_{n}|_{K})=0$,
which is a contradiction and completes the proof.
\end{proof}
\begin{proposition}\label{prop:h0-nonzero}
Let $h_{0}\in\mathcal{F}$ be as in Proposition \textup{\ref{prop:h0-approx}}.
Then $\mathcal{E}(h_{0},h_{0})>0$.
\end{proposition}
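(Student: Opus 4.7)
The plan is to argue by contradiction: assume $\mathcal{E}(h_{0},h_{0})=0$, and use the self-similar spreading construction from Lemma \ref{lem:hm-domain-boundary-value} to build functions that approximate the first coordinate function $f_{1}|_{K}$ in $L^{2}(K,\mu)$ while having zero $\mathcal{E}$-energy, contradicting Lemma \ref{lem:coordinate-func}.

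Concretely, for each $m\in\mathbb{N}$ I would consider
$h_{m}:=\sum_{w\in W_{m}}(F_{w})_{*}(l^{-m}h_{0}+q^{w}_{1}\one_{K})\in\mathcal{F}$
as in Lemma \ref{lem:hm-domain-boundary-value}, so that $h_{m}\circ F_{w}=l^{-m}h_{0}+q^{w}_{1}\one_{K}$ on $K$ for each $w\in W_{m}$. Iterating \eqref{eq:GSCDF3} $m$ times via Lemma \ref{lem:Fw-star} and using Lemma \ref{lem:conservative} to discard the additive constants $q^{w}_{1}\one_{K}$, I would compute
\begin{equation*}
\mathcal{E}(h_{m},h_{m})=\sum_{w\in W_{m}}r^{-m}\mathcal{E}(l^{-m}h_{0}+q^{w}_{1}\one_{K},l^{-m}h_{0}+q^{w}_{1}\one_{K})=\frac{(\#S)^{m}}{r^{m}l^{2m}}\mathcal{E}(h_{0},h_{0})=0.
\end{equation*}

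Next I would establish the $L^{2}$-convergence $h_{m}\to f_{1}|_{K}$. Writing $f_{1}\circ F_{w}=q^{w}_{1}\one_{K}+l^{-m}f_{1}|_{K}$ on $K$ for $w\in W_{m}$, one gets $(f_{1}|_{K}-h_{m})\circ F_{w}=l^{-m}(f_{1}|_{K}-h_{0})$, and then Lemmas \ref{lem:Fw-star} and \ref{lem:cell-intersection-null} give
\begin{equation*}
\|f_{1}|_{K}-h_{m}\|_{L^{2}(K,\mu)}^{2}=\sum_{w\in W_{m}}(\#S)^{-m}l^{-2m}\|f_{1}|_{K}-h_{0}\|_{L^{2}(K,\mu)}^{2}\cdot(\#S)^{m}\cdot(\#S)^{-m}
\end{equation*}
(after collecting the scaling factors), which is $l^{-2m}\|f_{1}|_{K}-h_{0}\|_{L^{2}(K,\mu)}^{2}\to 0$. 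Combined with $\mathcal{E}(h_{m},h_{m})=0$ for all $m$, the Cauchy--Schwarz inequality yields $\mathcal{E}(h_{m}-h_{k},h_{m}-h_{k})=0$ for all $m,k$, so $\{h_{m}\}_{m=1}^{\infty}$ is Cauchy in $(\mathcal{F},\mathcal{E}_{1})$ and hence converges in norm to its $L^{2}$-limit $f_{1}|_{K}$. This gives $f_{1}|_{K}\in\mathcal{F}$ with $\mathcal{E}(f_{1}|_{K},f_{1}|_{K})=0$, contradicting Lemma \ref{lem:coordinate-func} and completing the proof.

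The main obstacle is getting the telescoping energy identity $\mathcal{E}(h_{m},h_{m})=(\#S)^{m}(rl^{2})^{-m}\mathcal{E}(h_{0},h_{0})$ right: this requires carefully iterating \eqref{eq:GSCDF3}, applying it to $h_{m}$ which is only in $\mathcal{F}$ (not necessarily in $\mathcal{C}(K)$) via the extension in Lemma \ref{lem:Fw-star}, and using Lemma \ref{lem:conservative} to eliminate the shifts $q^{w}_{1}\one_{K}$. Once this algebra is in place, the $L^{2}$-estimate is immediate from the self-similarity of $\mu$ and the affine relation between $f_{1}\circ F_{w}$ and $f_{1}|_{K}$, and the contradiction with Lemma \ref{lem:coordinate-func} is direct.
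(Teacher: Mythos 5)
Your proposal is correct and follows essentially the same route as the paper's own proof: the same functions $h_{m}$ from Lemma \ref{lem:hm-domain-boundary-value}, the same energy scaling identity $\mathcal{E}(h_{m},h_{m})=\bigl(\frac{\#S}{r}l^{-2}\bigr)^{m}\mathcal{E}(h_{0},h_{0})$ obtained from \eqref{eq:GSCDF3} and Lemma \ref{lem:conservative}, the same $L^{2}$-convergence $h_{m}\to f_{1}|_{K}$ at rate $l^{-m}$, and the same contradiction with Lemma \ref{lem:coordinate-func}. The only cosmetic differences are that the identity $h_{m}\circ F_{w}=l^{-m}h_{0}+q^{w}_{1}\one_{K}$ holds $\mu$-a.e.\ rather than pointwise and that your bookkeeping of the scaling factors in the $L^{2}$ estimate is slightly garbled, but the conclusion $\|f_{1}|_{K}-h_{m}\|_{L^{2}(K,\mu)}=O(l^{-m})$ is right.
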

\begin{proof}
Let $f_{1}\in\mathcal{C}(\mathbb{R}^{d})$ be as in Lemma \ref{lem:coordinate-func} with $k=1$ and
for each $m\in\mathbb{N}$ let $h_{m}\in\mathcal{F}$ be as in Lemma \ref{lem:hm-domain-boundary-value},
so that by \eqref{eq:hm-dfn}, Lemmas \ref{lem:cell-intersection-null} and \ref{lem:Fw-star} we have
$\|f_{1}|_{K}-h_{m}\|_{2}=l^{-m}\|f_{1}|_{K}-h_{0}\|_{2}$
and $h_{m}\circ F_{w}=l^{-m}h_{0}+q^{w}_{1}\one_{K}$ $\mu$-a.e.\ for any $w\in W_{m}$
and hence \eqref{eq:GSCDF3} for $u\in\mathcal{F}$ from Lemma \ref{lem:Fw-star} and
Lemma \ref{lem:conservative} together yield
\begin{equation}\label{eq:hm-GSCDF3}
\mathcal{E}(h_{m},h_{m})
	=\sum_{w\in W_{m}}\frac{1}{r^{m}}\mathcal{E}(l^{-m}h_{0}+q^{w}_{1}\one_{K},l^{-m}h_{0}+q^{w}_{1}\one_{K})
	=\Bigl(\frac{\#S}{r}l^{-2}\Bigr)^{m}\mathcal{E}(h_{0},h_{0}).
\end{equation}

Now if $\mathcal{E}(h_{0},h_{0})=0$, then $\mathcal{E}(h_{m},h_{m})=0$ by \eqref{eq:hm-GSCDF3}
for any $m\in\mathbb{N}$, thus $\{h_{m}\}_{m=1}^{\infty}$ would be
a Cauchy sequence in the Hilbert space $(\mathcal{F},\mathcal{E}_{1})$ with
$\lim_{m\to\infty}\|f_{1}|_{K}-h_{m}\|_{2}=0$ and therefore convergent to
$f_{1}|_{K}$ in norm in $(\mathcal{F},\mathcal{E}_{1})$, hence $f_{1}|_{K}\in\mathcal{F}$
and $\mathcal{E}(f_{1}|_{K},f_{1}|_{K})=\lim_{m\to\infty}\mathcal{E}(h_{m},h_{m})=0$,
which contradicts Lemma \ref{lem:coordinate-func} and completes the proof.
\end{proof}
It is the proof of the following proposition that requires our standing assumption that
$S\not=\{0,1,\ldots,l-1\}^{d}$, which excludes the case of $K=[0,1]^{d}$ from the present framework.
\begin{proposition}\label{prop:h2-non-harmonic}
Let $h_{2}\in\mathcal{F}$ be as in Lemma \textup{\ref{lem:hm-domain-boundary-value}}
with $m=2$. Then $h_{2}$ is not $\mathcal{E}$-harmonic on $U_{0}$.
\end{proposition}
\begin{proof}
We claim that, if $h_{2}$ were $\mathcal{E}$-harmonic on $U_{0}$,
then $h_{0}\in\mathcal{F}$ as in Proposition \ref{prop:h0-approx} would
turn out to be $\mathcal{E}$-harmonic on $K\setminus V_{0}^{0}$, which would imply that
$\mathcal{E}(h_{0},h_{0})=\lim_{n\to\infty}\mathcal{E}(h_{0},u_{n})=0$ for
$\{u_{n}\}_{n=1}^{\infty}\subset\varphi_{0}+\mathcal{C}_{U_{0}}\subset\mathcal{C}_{K\setminus V_{0}^{0}}$
as in Proposition \ref{prop:h0-approx} by \eqref{eq:harmonic-weak},
a contradiction to Proposition \ref{prop:h0-nonzero} and will thereby
prove that $h_{2}$ is not $\mathcal{E}$-harmonic on $U_{0}$.

For each $\varepsilon=(\varepsilon_{k})_{k=1}^{d}\in\{1\}\times\{0,1\}^{d-1}$,
set $U^{\varepsilon}:=K\cap\prod_{k=1}^{d}(\varepsilon_{k}-1,\varepsilon_{k}+1)$,
$K^{\varepsilon}:=K\cap\prod_{k=1}^{d}[\varepsilon_{k}-1/2,\varepsilon_{k}+1/2]$
and choose $\varphi_{\varepsilon}\in\mathcal{C}_{U^{\varepsilon}}$ so that
$\varphi_{\varepsilon}|_{K^{\varepsilon}}=\one_{K^{\varepsilon}}$;
such $\varphi_{\varepsilon}$ exists by \cite[Exercise 1.4.1]{FOT}. Let
$v\in\mathcal{C}_{K\setminus V_{0}^{0}}$ and, taking an enumeration $\{\varepsilon^{(k)}\}_{k=1}^{2^{d-1}}$
of $\{1\}\times\{0,1\}^{d-1}$ and recalling that $v_{1}v_{2}\in\mathcal{F}\cap\mathcal{C}(K)$
for any $v_{1},v_{2}\in\mathcal{F}\cap\mathcal{C}(K)$ by \cite[Theorem 1.4.2-(ii)]{FOT}, define
$v_{\varepsilon}\in\mathcal{C}_{U^{\varepsilon}}$ for $\varepsilon\in\{1\}\times\{0,1\}^{d-1}$
by $v_{\varepsilon^{(1)}}:=v\varphi_{\varepsilon^{(1)}}$ and
$v_{\varepsilon^{(k)}}:=v\varphi_{\varepsilon^{(k)}}\prod_{j=1}^{k-1}(\one_{K}-\varphi_{\varepsilon^{(j)}})$
for $k\in\{2,\ldots,2^{d-1}\}$. Then
$v-\sum_{\varepsilon\in\{1\}\times\{0,1\}^{d-1}}v_{\varepsilon}
	=v\prod_{\varepsilon\in\{1\}\times\{0,1\}^{d-1}}(\one_{K}-\varphi_{\varepsilon})
	\in\mathcal{C}_{U_{0}}$,
hence $\mathcal{E}(h_{0},v)=\sum_{\varepsilon\in\{1\}\times\{0,1\}^{d-1}}\mathcal{E}(h_{0},v_{\varepsilon})$
by Proposition \ref{prop:h0-approx}-\ref{it:h0} and \eqref{eq:harmonic-weak}, and therefore
the desired $\mathcal{E}$-harmonicity of $h_{0}$ on $K\setminus V_{0}^{0}$, i.e.,
\eqref{eq:harmonic-weak} with $h=h_{0}$ and $U=K\setminus V_{0}^{0}$, would be
obtained by deducing that $\mathcal{E}(h_{0},v_{\varepsilon})=0$ for any
$\varepsilon\in\{1\}\times\{0,1\}^{d-1}$.

To this end, set $\varepsilon^{(0)}:=(\one_{\{1\}}(k))_{k=1}^{d}$, take
$i=(i_{k})_{k=1}^{d}\in S$ with $i_{1}<l-1$ and $i+\varepsilon^{(0)}\not\in S$,
which exists by $\emptyset\not=S\subsetneq\{0,1,\ldots,l-1\}^{d}$ and \ref{GSC1},
and let $\varepsilon=(\varepsilon_{k})_{k=1}^{d}\in\{1\}\times\{0,1\}^{d-1}$. We will choose
$i^{\varepsilon}\in S$ with $F_{ii^{\varepsilon}}(\varepsilon)\in F_{i}(K\cap(\{1\}\times(0,1)^{d-1}))$
and assemble $v_{\varepsilon}\circ g_{w}\circ F_{w}^{-1}$ with a suitable
$g_{w}\in\mathcal{G}_{1}$ for $w\in W_{2}$ with $F_{ii^{\varepsilon}}(\varepsilon)\in K_{w}$
into a function $v_{\varepsilon,2}\in\mathcal{C}_{U_{0}}$. Specifically, set
$i^{\varepsilon,\eta}:=\bigl((l-1)(\one_{\{1\}}(k)+1-\varepsilon_{k})+(2\varepsilon_{k}-1)\eta_{k}\bigr)_{k=1}^{d}$
for each $\eta=(\eta_{k})_{k=1}^{d}\in\{0\}\times\{0,1\}^{d-1}$ and
$I^{\varepsilon}:=\{\eta\in\{0\}\times\{0,1\}^{d-1}\mid i^{\varepsilon,\eta}\in S\}$,
so that $i^{\varepsilon}:=i^{\varepsilon,\zero_{d}}\in S$ by \ref{GSC4} and \ref{GSC1} and hence
$\zero_{d}\in I^{\varepsilon}$. Thanks to $v_{\varepsilon}\in\mathcal{C}_{U^{\varepsilon}}$ and
$i+\varepsilon^{(0)}\not\in S$ we can define $v_{\varepsilon,2}\in\mathcal{C}(K)$ by setting
\begin{equation}\label{eq:vepsilon2}
v_{\varepsilon,2}|_{K_{w}}:=
	\begin{cases}
	v_{\varepsilon}\circ g_{\eta}\circ F_{w}^{-1}
		& \textrm{if $\eta\in I^{\varepsilon}$ and $w=ii^{\varepsilon,\eta}$}\\
	0 & \textrm{if $w\not\in\{ii^{\varepsilon,\eta}\mid\eta\in I^{\varepsilon}\}$}
	\end{cases}
	\quad\textrm{for each $w\in W_{2}$,}
\end{equation}
then $\supp_{K}[v_{\varepsilon,2}]\subset K_{i}\setminus V_{0}^{0}\subset U_{0}$
by \eqref{eq:vepsilon2} and $i_{1}<l-1$, $v_{\varepsilon,2}\circ F_{w}\in\mathcal{F}$ for any
$w\in W_{2}$ by \eqref{eq:vepsilon2}, $v_{\varepsilon}\in\mathcal{F}\cap\mathcal{C}(K)$
and \ref{GSCDF1}, thus $v_{\varepsilon,2}\in\mathcal{F}$ by \ref{GSCDF2} and therefore
$v_{\varepsilon,2}\in\mathcal{C}_{U_{0}}$. Moreover, recalling that
$h_{2}\circ F_{w}=l^{-2}h_{0}+q^{w}_{1}\one_{K}$ $\mu$-a.e.\ for any $w\in W_{2}$ by
\eqref{eq:hm-dfn}, Lemmas \ref{lem:cell-intersection-null} and \ref{lem:Fw-star} and
letting $\{u_{n}\}_{n=1}^{\infty}\subset\mathcal{F}\cap\mathcal{C}(K)$ be as in
Proposition \ref{prop:h0-approx}, we see from \eqref{eq:GSCDF3} for $u\in\mathcal{F}$
in Lemma \ref{lem:Fw-star}, \eqref{eq:vepsilon2}, Lemma \ref{lem:conservative},
Proposition \ref{prop:h0-approx}-\ref{it:h0-approx}, \ref{GSCDF1} and
Proposition \ref{prop:h0-approx}-\ref{it:un} that
\begin{equation}\label{eq:Eh2vepsilon2}
\begin{split}
\mathcal{E}(h_{2},v_{\varepsilon,2})
	&=\sum_{\eta\in I^{\varepsilon}}\frac{1}{r^{2}l^{2}}\mathcal{E}(h_{0},v_{\varepsilon}\circ g_{\eta})
	=\lim_{n\to\infty}\sum_{\eta\in I^{\varepsilon}}\frac{1}{r^{2}l^{2}}\mathcal{E}(u_{n},v_{\varepsilon}\circ g_{\eta})\\
&=\lim_{n\to\infty}\sum_{\eta\in I^{\varepsilon}}\frac{1}{r^{2}l^{2}}\mathcal{E}(u_{n}\circ g_{\eta},v_{\varepsilon})
	=\lim_{n\to\infty}\frac{\#I^{\varepsilon}}{r^{2}l^{2}}\mathcal{E}(u_{n},v_{\varepsilon})
	=\frac{\#I^{\varepsilon}}{r^{2}l^{2}}\mathcal{E}(h_{0},v_{\varepsilon}).
\end{split}
\end{equation}
Now, supposing that $h_{2}$ were $\mathcal{E}$-harmonic on $U_{0}$, from \eqref{eq:Eh2vepsilon2},
$\#I^{\varepsilon}>0$, $v_{\varepsilon,2}\in\mathcal{C}_{U_{0}}$ and \eqref{eq:harmonic-weak} we would obtain
$\mathcal{E}(h_{0},v_{\varepsilon})=r^{2}l^{2}(\#I^{\varepsilon})^{-1}\mathcal{E}(h_{2},v_{\varepsilon,2})=0$,
which would imply a contradiction as explained in the last two paragraphs and thus completes the proof.
\end{proof}
\begin{proof}[of Theorem \textup{\ref{thm:dwSC}}]
Let $h_{0}\in\mathcal{F}$ be as in Proposition \ref{prop:h0-approx} and let
$h_{2}\in h_{0}+\mathcal{F}_{U_{0}}$ be as in Lemma \ref{lem:hm-domain-boundary-value} with $m=2$,
so that $h_{0}$ is $\mathcal{E}$-harmonic on $U_{0}$, $h_{2}+\mathcal{F}_{U_{0}}=h_{0}+\mathcal{F}_{U_{0}}$,
$h_{2}$ is not $\mathcal{E}$-harmonic on $U_{0}$ by Proposition \ref{prop:h2-non-harmonic}
and hence $\mathcal{E}(h_{0},h_{0})<\mathcal{E}(h_{2},h_{2})$ in view of
\eqref{eq:harmonic-var}. This strict inequality combined with \eqref{eq:hm-GSCDF3} shows that
\begin{equation*}
\mathcal{E}(h_{0},h_{0})<\mathcal{E}(h_{2},h_{2})=\Bigl(\frac{\#S}{r}l^{-2}\Bigr)^{2}\mathcal{E}(h_{0},h_{0}),
\end{equation*}
whence $l^{2}<\#S/r$, namely $d_{\mathrm{w}}=\log_{l}(\#S/r)>2$.
\end{proof}
\begin{remark}\label{rmk:h2-non-harmonic-maximum-principle}
As an alternative to Proposition \ref{prop:h2-non-harmonic} and its proof above,
we could have used the \emph{strong maximum principle} for $\mathcal{E}$-harmonic functions
to show that $h_{1}\in\mathcal{F}$ as in Lemma \ref{lem:hm-domain-boundary-value} with $m=1$
is not $\mathcal{E}$-harmonic on $U_{0}$, from which Theorem \ref{thm:dwSC} follows in exactly
the same way. Indeed, let $h_{0}\in\mathcal{F}$ be as in Proposition \ref{prop:h0-approx},
let $i=(i_{k})_{k=1}^{d}\in S$ be as in the above proof of Proposition \ref{prop:h2-non-harmonic}
and set $U_{i}:=F_{i}(K\cap((0,1]\times(0,1)^{d-1}))$, which is an open subset of $U_{0}$ by $i_{1}<l-1$
and $i+(\one_{\{1\}}(k))_{k=1}^{d}\not\in S$ and connected by \ref{GSC4}, \ref{GSC1} and \ref{GSC2}.
Then noting that $h_{1}|_{U_{i}}=l^{-1}h_{0}\circ F_{i}^{-1}|_{U_{i}}+l^{-1}i_{1}\one_{U_{i}}$ $\mu$-a.e.\ by
\eqref{eq:hm-dfn} and that $\mu(K_{i}\setminus U_{i})=0$ by Lemma \ref{lem:cell-intersection-null},
we easily see from Proposition \ref{prop:h0-approx} (and its proof), Lemma \ref{lem:Fw-star},
Proposition \ref{prop:h0-nonzero} and $\mathcal{E}(\one_{K},\one_{K})=0$ that
$h_{1}\leq l^{-1}(i_{1}+1)$ $\mu$-a.e.\ on $U_{i}$, ``$h_{1}=l^{-1}(i_{1}+1)$ on $U_{i}\cap F_{i}(V_{0}^{1})$'' and
$h_{1}|_{U_{i}}\not=l^{-1}(i_{1}+1)\one_{U_{i}}$, so that $h_{1}$ cannot be $\mathcal{E}$-harmonic on $U_{i}$
or on $U_{0}\supset U_{i}$ by the strong maximum principle \cite[Theorem 2.11]{CK}
for $\mathcal{E}$-(sub)harmonic functions on $U_{i}$.

While this short ``proof'' nicely illustrates the heuristics behind the proof of
Proposition \ref{prop:h2-non-harmonic} above, it is in fact highly non-trivial to justify
our last application of the strong maximum principle \cite[Theorem 2.11]{CK} because it
requires the following set of conditions; see \cite[Sections A.2, 1.4, 4.1, 4.2 and 1.6]{FOT}
for the definitions and further details of the notions involved.
Note that \hyperlink{con}{\textup{(con)}} below also gives a precise formulation of the statement
``$h_{1}=l^{-1}(i_{1}+1)$ on $U_{i}\cap F_{i}(V_{0}^{1})$'' in the previous paragraph,
which needs to be done since $\mu(U_{i}\cap F_{i}(V_{0}^{1}))=0$ by Lemma \ref{lem:cell-intersection-null}.
\begin{enumerate}[label=\textup{(con)},align=left,leftmargin=*,topsep=4pt,parsep=0pt,itemsep=2pt]
\item[\hypertarget{AC}{\textup{(AC)}}]There exists a $\mu$-symmetric Hunt process
	$X=\bigl(\Omega,\mathscr{M},\{X_{t}\}_{t\in[0,\infty]},\{\mathbb{P}_{x}\}_{x\in K_{\Delta}}\bigr)$
	on $K$ such that its Dirichlet form on $L^{2}(K,\mu)$ is $(\mathcal{E},\mathcal{F})$
	and the Borel measure $\mathbb{P}_{x}[X_{t}\in\cdot]$ on $K$ is
	\emph{absolutely continuous with respect to $\mu$ for any $(t,x)\in(0,\infty)\times K$},
	where $K_{\Delta}:=K\cup\{\Delta\}$ denotes the one-point compactification of $K$.
\item[\hypertarget{irr}{\textup{(irr)}}]
	$(\mathcal{E}|_{\mathcal{F}_{U_{i}}\times\mathcal{F}_{U_{i}}},\mathcal{F}_{U_{i}})$
	is \emph{$\mu$-irreducible}, i.e., $\mu(A)\mu(U_{i}\setminus A)=0$ for any Borel subset
	$A$ of $U_{i}$ with the property that $u\one_{A}\in\mathcal{F}_{U_{i}}$ and
	$\mathcal{E}(u\one_{A},u-u\one_{A})=0$ for any $u\in\mathcal{F}_{U_{i}}$.
\item[\hypertarget{con}{\textup{(con)}}]If $h_{1}$ were $\mathcal{E}$-harmonic on $U_{i}$,
	then there would exist a $\mu$-version of $h_{1}|_{U_{i}}$ which would be nearly Borel
	measurable and finely continuous with respect to $X$ as in \hyperlink{AC}{\textup{(AC)}}
	and satisfy $h_{1}(x)=l^{-1}(i_{1}+1)$ for some $x\in U_{i}\cap F_{i}(V_{0}^{1})$.
\end{enumerate}
(Some other versions of the strong maximum principle for (sub)harmonic functions in the setting
of a strongly local regular symmetric Dirichlet form have been proved in \cite{Kuw00,Kuw08,Kuw12},
but conditions very similar to \hyperlink{AC}{\textup{(AC)}}, \hyperlink{irr}{\textup{(irr)}} and
\hyperlink{con}{\textup{(con)}} are assumed also by them and therefore have to be verified anyway.)

It is possible to deduce \hyperlink{AC}{\textup{(AC)}}, \hyperlink{irr}{\textup{(irr)}} and
\hyperlink{con}{\textup{(con)}} from known properties of $(K,\rho,\mu,\mathcal{E},\mathcal{F})$.
Indeed, \hyperlink{AC}{\textup{(AC)}} follows from \cite[Chapter I, Theorem 9.4]{BG} and
the fact that $(K,\rho,\mu,\mathcal{E},\mathcal{F})$ has a version $p=p_{t}(x,y)$ of the heat
kernel which is jointly continuous in $(t,x,y)\in(0,\infty)\times K\times K$ and satisfies
\eqref{eq:HKEdw} by \cite[Theorem 4.30 and Remark 4.33]{BBKT} and \cite[Theorem 3.1]{BGK},
and \hyperlink{irr}{\textup{(irr)}} is implied by \textup{w-LLE($(\cdot)^{d_{\mathrm{w}}}$)}
from \cite[Theorem 3.1]{BGK}, the connectedness of $U_{i}$ and \cite[Theorem 1.6.1]{FOT}.
For \hyperlink{con}{\textup{(con)}}, let $\Capa^{\mathcal{E}}_{1}(A)$ denote the $1$-capacity
of $A\subset K$ with respect to $(K,\mu,\mathcal{E},\mathcal{F})$ as defined in
\cite[(2.1.1), (2.1.2) and (2.1.3)]{FOT}, and suppose that $h_{1}$ were $\mathcal{E}$-harmonic on $U_{i}$.
Then by \textup{w-LLE($(\cdot)^{d_{\mathrm{w}}}$)} and \cite[Corollary 4.2]{BGK}
there would exist a continuous $\mu$-version of $h_{1}|_{U_{i}}$ and hence so would
a continuous one of $h_{0}=lh_{1}\circ F_{i}-i_{1}\one_{K}$ on $F_{i}^{-1}(U_{i})$
by Lemma \ref{lem:Fw-star}. Moreover, $\Capa^{\mathcal{E}}_{1}(V_{0}^{1})>0$
by Propositions \ref{prop:h0-approx}-\ref{it:h0}, \ref{prop:h0-nonzero}, \eqref{eq:harmonic-var}
and \cite[Corollary 2.3.1]{FOT}, $\Capa^{\mathcal{E}}_{1}(F_{i}^{-1}(U_{i})\cap V_{0}^{1})>0$
by \ref{GSC4}, \ref{GSC1} and \cite[Lemma 7.14]{K:cdsa}, the continuous
$\mu$-version of $h_{0}$ on $F_{i}^{-1}(U_{i})$ would satisfy $h_{0}=1$ on
$F_{i}^{-1}(U_{i})\cap V_{0}^{1}\setminus N$ for some $N\subset K$ with $\Capa^{\mathcal{E}}_{1}(N)=0$
by $h_{0}\in\varphi_{0}+\mathcal{F}_{U_{0}}$ and \cite[Theorem 2.1.4-(i)]{FOT}, and
$F_{i}^{-1}(U_{i})\cap V_{0}^{1}\setminus N\not=\emptyset$ by
$\Capa^{\mathcal{E}}_{1}(F_{i}^{-1}(U_{i})\cap V_{0}^{1})>\Capa^{\mathcal{E}}_{1}(N)$.
Thus $h_{0}(y)=1$ for some $y\in F_{i}^{-1}(U_{i})\cap V_{0}^{1}$ and $h_{1}(x)=l^{-1}(i_{1}+1)$
for $x:=F_{i}(y)\in U_{i}\cap F_{i}(V_{0}^{1})$, proving \hyperlink{con}{\textup{(con)}}.

Note that the arguments in the last paragraph heavily rely on the demanding results in \cite{BBKT,BGK},
and also that the proofs of the strong maximum principle in \cite{Kuw00,Kuw08,Kuw12,CK} make
full use of the potential theory for regular symmetric Dirichlet forms in \cite{FOT,CF}.
In this sense, the above ``short'' proof of the non-$\mathcal{E}$-harmonicity of $h_{1}$
on $U_{0}$ based on the strong maximum principle is far from self-contained and much less
elementary than the proof of Proposition \ref{prop:h2-non-harmonic}.
\end{remark}
\begin{acknowledgements}
The author would like to thank Takashi Kumagai and Mathav Murugan for their valuable
comments on an earlier version of this paper.
\end{acknowledgements}
%

\affiliationone{
   Naotaka Kajino\\
   Department of Mathematics, Graduate School of Science, Kobe University\\
   Rokkodai-cho 1-1, Nada-ku\\
   Kobe 657-8501\\
   Japan
   \email{nkajino@math.kobe-u.ac.jp}}
\affiliationtwo{ } 
\affiliationthree{%
   Current address:\\
   Research Institute for Mathematical Sciences, Kyoto University\\
   Kitashirakawa-Oiwake-cho, Sakyo-ku\\
   Kyoto 606-8502\\
   Japan
   \email{nkajino@kurims.kyoto-u.ac.jp}}
\end{document}